\begin{document}


\markboth{J. Kov\'a\v c, K. Jankov\'a}{Random dynamical systems generated by two Allee maps}

\title{Random dynamical systems generated by two Allee maps}

\author{Jozef Kov\'a\v c 
}

\address{Department of Applied Mathematics and Statistics, Faculty of Mathematics, Physics and Informatics, Comenius University in Bratislava, Mlynsk\'a dolina\\
Bratislava, 84248, Slovakia\\
jozef.kovac@fmph.uniba.sk
}

\author{Katar\'ina Jankov\'a}
\address{Department of Applied Mathematics and Statistics, Faculty of Mathematics, Physics and Informatics, Comenius University in Bratislava, Mlynsk\'a dolina\\
Bratislava, 84248, Slovakia\\
jankova@fmph.uniba.sk
}

\maketitle


\begin{abstract}
In this paper, we study random dynamical systems generated by two Allee maps. Two models are considered - with and without small random perturbations. It is shown that the behavior of the systems is very similar to the behavior of the deterministic system if we use strictly increasing Allee maps. However, in the case of unimodal Allee maps, the behavior can dramatically change irrespective of the initial conditions. 
\end{abstract}

\keywords{Random dynamical systems, Allee maps, Markov process}
\section{Introduction}

\noindent The Allee effect, which was first described by W.C. Allee [1932], is a biological phenomenon characterized by a correlation between the population size and the per capita population growth rate.  In the literature, the most frequently mentioned reason of this effect is the difficulty of finding mates in a smaller population (\cite{BB}). There are several possible scenarios for the behavior of the population size according to the strength of the Allee effect (see \cite{BB}). 

The Allee effect was investigated from various points of view using models of dynamical and semi-dynamical systems. Interesting results were obtained for non-autonomous  periodic systems. We mention several recent achievements. A special attention was given to the study of the Beverton-Holt equation. In connection with a periodic model based on this equation, two conjectures were formulated in \cite{Cushing}. These conjectures were proved in the later results of Elaydi and Sacker [2005] and also for more general systems in \cite{Kon} (see also \cite{Elaydi1}). It was shown that periodic fluctuations in the environment have, in a certain sense, a deleterious effect on the average population in the corresponding models. Periodic fluctuations in a context of economical models with an Allee effect were studied in \cite{Luis2}. A class of unimodal Allee maps was introduced in \cite{Luis} where properties and stability of the three fixed points in non-autonomous periodic dynamical systems with period~2 were studied. 
\par  Some recent works take into account also randomly varying systems with the Allee effect. The stochastic stability of such systems was investigated in \cite{Haskell} and \cite{Bezandry}. Environmental stochasticity in a connection with Allee effect was examined and persistence, asymptotic extinction and conditional persistence for stochastic difference equations was analyzed in \cite{Roth}.

In this paper, only the extinction-survival scenario is studied. If the population size drops below a certain critical value, the population becomes extinct in the long run. If the size exceeds the critical value, the population is generally able to survive. Therefore the function (called an Allee map) $f:[0,b]\to [0,b]\ (b\le\infty)$ describing population dynamics with this effect 
\begin{equation}
x_{n+1}=f(x_n)\label{eq1}
\end{equation}
($x_n$ is the population size in the $n$-th generation) must satisfy the following conditions: 
\begin{itemlist}[(3)]
\item $f$ has three fixed points - zero, which is a stable fixed point, an unstable threshold point $A_f>0$ and a fixed point $K_f>A_f$, which can (but does not have to) be stable,
\item $f(x)<x$ for all $x \in (0,A_f) \cup (K_f,b]$,
\item $f(x)>x$ for all $x \in (A_f,K_f)$.
\end{itemlist}

We consider a pair of continuous Allee maps $f, g$ and two random dynamical systems generated by these maps. The first system is given by
\begin{equation}
X_{n+1}= 
\begin{cases} 
f(X_n) & \text{with probability } p,  \\ 
g(X_n) & \text{with probability } 1-p
\end{cases}\label{eq2}
\end{equation}
while the other one, 
\begin{equation}
Y_{n+1}= 
\begin{cases} 
\chi(f(Y_n)+\varepsilon_n) & \text{with probability } p,  \\ 
\chi(g(Y_n)+\varepsilon_n) & \text{with probability } 1-p, 
\end{cases}\label{eq3}
\end{equation}
contains also perturbations. In~(\ref{eq3}), $\{\varepsilon_n\}_{n=0}^\infty$ are continuous i.i.d. random variables with a positive density on the interval 
$(-\delta,\delta)$, $\delta>0$ and $\chi:R\to R$ is a function defined as follows: 
\begin{equation}
\chi(x)= 
\begin{cases} 
	0 & \text{if } x< 0, \\
	x & \text{if } x\in [0,b], \\
	b & \text{if } x>b.
\end{cases}\label{eq4}
\end{equation}
More formally (see \cite{Bhat}), we can write $X_{n+1}=H_n(X_n)$, where $\{H_n\}_{n=1}^\infty$ are i.i.d. random functions such that $P(H_n=f)=p$ and $P(H_n=g)=1-p$ for some $p\in (0,1)$ and $X_0=x_0\in [0,b]$, similarly for model~(\ref{eq3}).
Given $X_n$, the variable $X_{n+1}$ does not depend on $X_{n-1}, X_{n-2}, \ldots$, and thus the process $\{X_n\}_{n=0}^\infty$ 
is a  Markov process with $X_0=x_0\in [0,b]$.  The same is true for the process $\{Y_n\}_{n=0}^\infty$ with $Y_0=y_0\in [0,b]$.

Braverman [2011] considered a model similar to~(\ref{eq3}), i.e.,  with one function and scarce random perturbations.
It was shown that such perturbations can, under some conditions, cause the extinction or survival of the population regardless of its initial size. 

Section~\ref{sec2} will focus on the behavior of models (\ref{eq2}) and (\ref{eq3}) for strictly increasing functions $f$ and $g$. Unimodal functions $f$ and $g$ will be analyzed in Section~\ref{Sec3} - in both models, the population becomes extinct under some conditions, irrespective of its initial size. We can thus observe a behavior similar to the Parrondo's paradox \cite{Parrondo}.

\section{Strictly increasing Allee maps}
\label{sec2}

In this section, we assume that the functions $f$ and $g$ are continuous and strictly increasing. An example of such Allee map,
\begin{equation}
f(x)=\frac{\rho x^2}{A+x^2}, \ \rho>2\sqrt{A},\ \rho>0, A>0 \label{eq5}
\end{equation}
can be found in \cite{Hoppen}. It is also a special case of a new model presented in \cite{Elaydi3}. 
\begin{figure}[h]
\begin{center}
\includegraphics[height=8cm,width=8cm]{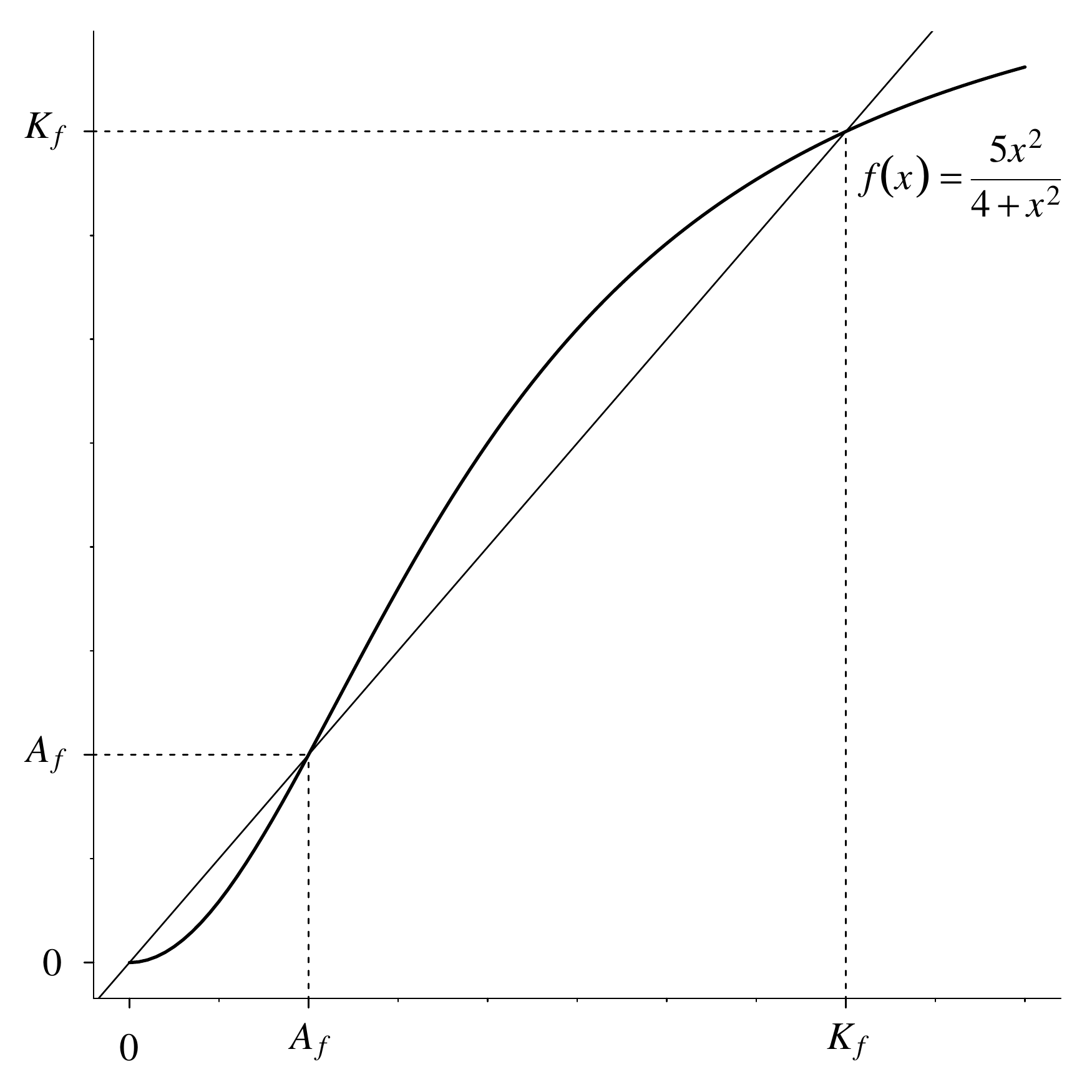}
\end{center}
\caption{Example of a strictly increasing Allee map.}
\label{fig1}
\end{figure}
The behavior of the non-stochastic model (i.e., model~(\ref{eq1})) is very simple. If $x_0>A_f$, then $\lim\limits_{n\to\infty}x_n=K_f$ and if $x_0<A_f$, then $\lim\limits_{n\to\infty}x_n=0$. Before we formulate theorems about models (\ref{eq2}) and (\ref{eq3}), it is useful to mention the following lemma and its corollary.
\begin{lemma}
\label{lem1}
Let $\{X_n\}_{n=0}^\infty$ be a Markov process and let $X_n\in B$ for any $n\in N$. Assume that $B_1\subset B$ is such that $P(X_{n+k}\in B_1\mid X_n\in B)\ge \lambda$ for some $\lambda>0$ and $k\in N$. Then $P(\exists n_0\in N: X_{n_0}\in B_1)=1$.
\end{lemma}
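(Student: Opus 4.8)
The plan is to sample the chain along the arithmetic progression of times $0,k,2k,3k,\dots$ and show that the probability of avoiding $B_1$ at \emph{all} of these sampling times decays geometrically. Write $E_m=\{X_{mk}\notin B_1\}$ and $A_m=\bigcap_{j=1}^m E_j$ (with $A_0$ the whole space), so that $A_m$ is the event that the process has not been seen in $B_1$ at any of the first $m$ sampling times. Since $\{X_n\notin B_1\text{ for all }n\in N\}\subseteq\bigcap_{m=1}^\infty A_m$, it is enough to prove that $P(A_m)\to 0$ as $m\to\infty$.

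The key step is the recursive estimate $P(A_m)\le(1-\lambda)\,P(A_{m-1})$ for every $m\ge 1$. This is trivial if $P(A_{m-1})=0$, so assume $P(A_{m-1})>0$. The event $A_{m-1}$ is measurable with respect to $\sigma(X_0,\dots,X_{(m-1)k})$, so by the Markov property, conditioning first on $A_{m-1}$ and then on the value of $X_{(m-1)k}$ gives
\[
P\big(E_m^{\,c}\mid A_{m-1}\big)=E\big[\,P(X_{mk}\in B_1\mid X_{(m-1)k})\ \big|\ A_{m-1}\,\big].
\]
Because the trajectory stays in $B$, the state $X_{(m-1)k}$ lies in $B$ almost surely, and the hypothesis (read as the uniform lower bound $P(X_{mk}\in B_1\mid X_{(m-1)k}=x)\ge\lambda$ for every admissible $x\in B$, which holds by the time‑homogeneity of the chain applied over one block of $k$ steps) forces the inner conditional probability to be at least $\lambda$. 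Hence $P(E_m^{\,c}\mid A_{m-1})\ge\lambda$, i.e. $P(E_m\mid A_{m-1})\le 1-\lambda$; multiplying by $P(A_{m-1})$ and using $A_m=A_{m-1}\cap E_m$ yields the recursion. Iterating gives $P(A_m)\le(1-\lambda)^m\to 0$, whence $P(X_n\notin B_1\text{ for all }n)=P(\bigcap_m A_m)=\lim_m P(A_m)=0$, which is the claim.

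The only real subtlety — and thus the main obstacle — is the rigorous handling of the conditioning: one must verify that $A_{m-1}$ belongs to $\sigma(X_0,\dots,X_{(m-1)k})$ so that the Markov property is applicable, and one must interpret the stated hypothesis $P(X_{n+k}\in B_1\mid X_n\in B)\ge\lambda$ as a bound uniform over the current state in $B$ rather than merely an averaged bound, since only the uniform form survives the additional conditioning on $A_{m-1}$. Everything else is a routine geometric‑series estimate.
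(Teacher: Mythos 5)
Your proof is correct, and since the paper itself does not prove Lemma~\ref{lem1} but defers to Jankov\'a \& Sm\'ital [1995], where the argument is the same sampling-at-times-$0,k,2k,\dots$ conditioning argument with the geometric bound $P(A_m)\le(1-\lambda)^m$, you are taking essentially the standard (and the cited) route. Your remark that the hypothesis must be read as a bound uniform over the current state, i.e. $P(X_{n+k}\in B_1\mid X_n=x)\ge\lambda$ for all $x\in B$, is exactly the right reading: it is the form in which the bound is actually produced in every application in the paper (via $f^m(x)\in I_1$ or $g^m(x)\in I_1$ for all $x$), and only this uniform form survives the extra conditioning on $A_{m-1}$, whereas the literal averaged reading would not suffice.
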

A proof of this lemma can be found in \cite{Jankova}.
\begin{corollary}
\label{cor1}
Let $I_1\subset I_2$ be intervals such that the following conditions are satisfied:
\begin{arabiclist}[(2)]
\item $f(I_i)\subset I_i$ and $g(I_i)\subset I_i$, $i=1,2$.
\item For any $x \in I_2$ $f^m(x)\in I_1$ or $g^m(x)\in I_1$ for some $m \in N$. 
\end{arabiclist}
Then $P(\exists n_0 \in N:X_n\in I_1 \ \forall n \ge n_0 \mid x_0\in I_2)=1$.
\end{corollary}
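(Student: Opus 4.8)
\medskip
\noindent\textbf{Sketch of proof.}
The plan is to deduce the statement from Lemma~\ref{lem1} applied with $B=I_2$ and $B_1=I_1$. Condition~(1) of the corollary with $i=2$ says that $I_2$ is forward invariant under both $f$ and $g$, so starting from $x_0\in I_2$ every realization of the chain satisfies $X_n\in I_2$ for all $n\in N$; this is exactly the hypothesis ``$X_n\in B$ for all $n$'' of the lemma. Condition~(1) with $i=1$ says the same for $I_1$, hence $I_1$ is \emph{absorbing}: if $X_{n_0}\in I_1$ for some $n_0$, then $X_n\in I_1$ for all $n\ge n_0$. Therefore it suffices to prove $P(\exists\,n_0\in N:X_{n_0}\in I_1\mid x_0\in I_2)=1$, and the absorption property then upgrades this automatically to the claimed statement with ``$\forall\,n\ge n_0$''.

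To apply Lemma~\ref{lem1} I must produce a single $k\in N$ and $\lambda>0$ with $P(X_{n+k}\in I_1\mid X_n\in I_2)\ge\lambda$; since condition~(2) is only pointwise, the real work is to make it uniform. Fix $x\in I_2$. By~(2) there are $m=m(x)\in N$ and $h_x\in\{f,g\}$ with $h_x^{m(x)}(x)\in I_1$, and by continuity of $h_x^{m(x)}$ there is a relatively open subinterval $U_x\ni x$ of $I_2$ with $h_x^{m(x)}(U_x)\subseteq I_1$ (if $h_x^{m(x)}(x)$ is an endpoint of $I_1$ one first enlarges $m(x)$ slightly and uses $h_x(I_1)\subseteq I_1$; this is the only delicate point, and it is harmless since $U_x\subseteq I_2$). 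Assuming $I_2$ compact (the case occurring in the applications below; if $b=\infty$ one first restricts to an invariant compact subinterval), finitely many $U_{x_1},\dots,U_{x_r}$ cover $I_2$; put $k=\max_{1\le j\le r}m(x_j)$. Because $I_1$ is forward invariant, $h_{x_j}^{m(x_j)}(U_{x_j})\subseteq I_1$ implies $h_{x_j}^{k}(U_{x_j})\subseteq I_1$ (pad the composition with extra copies of $h_{x_j}$). Hence every $x\in I_2$ lies in some $U_{x_j}$ for which the single length-$k$ word $h_{x_j}^{k}$ drives it into $I_1$, so
\[
P\bigl(X_{n+k}\in I_1\mid X_n=x\bigr)\ \ge\ \min\bigl(p^{k},(1-p)^{k}\bigr)=:\lambda>0
\]
for every $x\in I_2$, and averaging over the conditional distribution of $X_n$ gives $P(X_{n+k}\in I_1\mid X_n\in I_2)\ge\lambda$.

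Lemma~\ref{lem1} now yields $P(\exists\,n_0\in N:X_{n_0}\in I_1\mid x_0\in I_2)=1$, and by the absorption of $I_1$ this is exactly the assertion of the corollary. I expect the main obstacle to be precisely the passage from the pointwise statement~(2) to the uniform lower bound required by Lemma~\ref{lem1}: it forces the use of continuity of $f,g$, of the forward invariance of $I_1$ (to equalize word lengths), and of compactness of $I_2$ (to extract a finite cover), and it is there that the minor boundary subtlety noted above also has to be disposed of.
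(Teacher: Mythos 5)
Your overall strategy is surely the intended one: the paper gives no explicit proof of Corollary~\ref{cor1}, and reducing it to Lemma~\ref{lem1} by noting that condition~(1) makes $I_2$ forward invariant (so the lemma's hypothesis $X_n\in B$ holds) and makes $I_1$ absorbing (so ``enters $I_1$'' upgrades to ``stays in $I_1$'') is exactly how the corollary must be read. Where you genuinely diverge from the paper is in the passage from the pointwise condition~(2) to the uniform bound $P(X_{n+k}\in I_1\mid X_n\in I_2)\ge\lambda$ that Lemma~\ref{lem1} requires. You obtain it by continuity, compactness of $I_2$ and a finite open cover; the paper never does this inside the corollary at all --- in each application (see the proof of Theorem~\ref{th1}) it verifies a \emph{uniform} version of~(2) beforehand, using that $f^m$ and $g^m$ are strictly increasing (e.g.\ $g^m(A_f)<\eta$ forces $g^m(x)<\eta$ for all $x\le A_f$, and $f^n(x_0)\searrow K_f$ with $f^n(K_f)=K_f$ forces $f^n([K_f,x_0])\subseteq[K_f,K_g]$). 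Your route is more general in that it does not use monotonicity, but the one delicate point you flag is not actually disposed of by your fix: if $h_x^{m(x)}(x)$ is an endpoint of $I_1$ that is a fixed point of $h_x$ (precisely the situation $h_x=f$, endpoint $K_f$, for $I_1=[K_f,K_g]$), then padding with further iterates of $h_x$ never moves the image into the interior, and no neighbourhood $U_x$ need exist. So as a standalone argument your uniformization has a residual gap at such boundary points, which the paper's monotonicity argument closes for free in the only cases where the corollary is invoked. A clean statement would either add monotonicity (available throughout Section~2) or strengthen condition~(2) to its uniform form, which is what the paper in effect verifies each time.
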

\begin{theorem}
\label{th1}
Let $A_f<A_g<K_f<K_g$. Then 
\begin{romanlist}[(iii)]
\item if $x_0\le A_f$, then $p_0\equiv P(\lim_{n\to \infty} X_n=0)=1$,
\item if $x_0\ge A_g$, then $p_1\equiv P(\exists n_0 \in N: X_n\in [K_f,K_g]\ \forall n\ge n_0)=1$,
\item if $x_0\in (A_f,A_g)$, then $p_0>0$, $p_1>0$ and $p_0+p_1=1$.
\end{romanlist}
\end{theorem}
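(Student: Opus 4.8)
\emph{Proof strategy.} Throughout I will use the standard monotone‑iteration facts for a continuous strictly increasing Allee map $h$ with thresholds $A_h<K_h$: since $h$ is increasing, every orbit $\{h^n(x)\}$ is monotone (compare $x$ with $h(x)$) and bounded, hence converges to the fixed point of $h$ lying in the region of $x$; thus $h^n(x)\downarrow 0$ for $x\in(0,A_h)$, $h^n(x)\uparrow K_h$ for $x\in(A_h,K_h)$, and $h^n(x)\downarrow K_h$ for $x\in(K_h,b]$. Using $A_f<A_g<K_f<K_g$, a short check then shows that each of the intervals $[0,\varepsilon]$ (any $\varepsilon\in(0,A_f)$), $[0,A_f]$, $(A_g,b]$, $[A_g,b]$ and $[K_f,K_g]$ is mapped into itself by both $f$ and $g$ — the non‑obvious inequalities being $g(A_f)<A_f$ (as $A_f\in(0,A_g)$), $f(A_g)>A_g$ (as $A_g\in(A_f,K_f)$), $g(K_f)>K_f$ (as $K_f\in(A_g,K_g)$), $f(K_g)<K_g$ (as $K_g\in(K_f,b]$), together with monotonicity and the fixed‑point relations.

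\emph{Parts (i) and (ii).} Since $x_0\le A_f$, the process stays in the invariant interval $[0,A_f]$; fix $\varepsilon\in(0,A_f)$ and apply Corollary~\ref{cor1} with $I_2=[0,A_f]$, $I_1=[0,\varepsilon]$ — condition~1 is the invariance above, and condition~2 holds because $f^m(x)\downarrow 0$ for $x\in[0,A_f)$ while $g^m(A_f)\downarrow 0$. Hence a.s.\ $X_n\le\varepsilon$ from some index on; intersecting over $\varepsilon=1/k$, $k\in N$, gives $X_n\to0$ a.s., i.e.\ $p_0=1$. For (ii) assume first $x_0>A_g$, so the process stays in the invariant interval $(A_g,b]$; apply Corollary~\ref{cor1} with $I_2=(A_g,b]$, $I_1=[K_f,K_g]$, checking condition~2 case by case: for $x\in(A_g,K_f)$ the increasing orbit $g^m(x)\uparrow K_g$ eventually lands in $(K_f,K_g)$; for $x\in(K_g,b]$ the decreasing orbit $f^m(x)\downarrow K_f$ eventually lands in $(K_f,K_g)$; on $[K_f,K_g]$ already one step of $f$ stays there. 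So $p_1=1$ when $x_0>A_g$. For the threshold value $x_0=A_g$, let $\tau$ be the first time the map $f$ is used; $\tau$ is a.s.\ finite and determined by $H_1,\dots,H_\tau$, so conditioning on $\tau$ and using independence of the later maps together with $X_\tau=f(A_g)>A_g$ and the case just proved again gives $p_1=1$.

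\emph{Part (iii).} Positivity is easy. Since $x_0\in(A_f,A_g)\subset(0,A_g)$ we have $g^m(x_0)\downarrow0$, so $g^{m_0}(x_0)<A_f$ for some $m_0$; the event $H_1=\dots=H_{m_0}=g$ has probability $(1-p)^{m_0}>0$ and sends $X$ into $[0,A_f]$, after which part~(i) forces $X_n\to0$, so $p_0\ge(1-p)^{m_0}>0$. Symmetrically $x_0\in(A_f,K_f)$ gives $f^m(x_0)\uparrow K_f$, so $f^{m_1}(x_0)>A_g$ for some $m_1$, the word $f^{m_1}$ (probability $p^{m_1}>0$) sends $X$ above $A_g$, and part~(ii) gives $p_1\ge p^{m_1}>0$. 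For $p_0+p_1=1$ it suffices to show that the process a.s.\ eventually leaves $(A_f,A_g)$: once it enters $[0,A_f]$ it stays there and $X_n\to0$ by (i); once it enters $[A_g,b]$ it stays there and enters $[K_f,K_g]$ for good by (ii); and (conditioning on the exit time) these two disjoint outcomes exhaust the escape event, hence have probabilities $p_0$ and $p_1$ summing to $1$. The escape itself comes from a uniform estimate: there are a positive integer $M$ and $\lambda_0>0$ with $P(X_{n+M}\notin(A_f,A_g)\mid X_n=x)\ge\lambda_0$ for every $x\in(A_f,A_g)$. To get it, split $(A_f,A_g)$ into $(A_f,A_f+\eta_1]$, where one use of $g$ lands in $[0,A_f]$ (by continuity, since $g(A_f)<A_f$); $[A_g-\eta_2,A_g)$, where one use of $f$ lands in $[A_g,b]$ (since $f(A_g)>A_g$); and the compact middle $[A_f+\eta_1,A_g-\eta_2]$, where, since $f^m(A_f+\eta_1)\uparrow K_f>A_g$ and $f$ is increasing, a fixed number $M$ of uses of $f$ carries every point above $A_g$. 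Padding the two short words with extra copies of $f$ (legitimate since $[0,A_f]$ and $[A_g,b]$ are invariant) we obtain, for each $x$, a word of length exactly $M$ that leaves $(A_f,A_g)$ and is realized with probability at least $(\min\{p,1-p\})^M=:\lambda_0$. Iterating this along $n=M,2M,3M,\dots$ and using the Markov property gives $P(X_{jM}\in(A_f,A_g),\ j=0,\dots,m)\le(1-\lambda_0)^m\to0$, so the first exit time is a.s.\ finite and $p_0+p_1=1$.

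\emph{Main obstacle.} The monotone‑iteration and invariance checks of the first paragraph and the bookkeeping in (i)–(ii) are routine. The genuinely delicate step is the uniform escape estimate for (iii): escape times from $(A_f,A_g)$ under a single map are not uniformly bounded (an orbit started just above $A_f$ needs arbitrarily many $f$'s to cross $A_g$, and likewise near $A_g$), so the escape word must be chosen with care near each threshold — a single $g$ just above $A_f$ and a single $f$ just below $A_g$, with finitely many $f$'s on the compact interior — to make one length $M$ work for all starting points. A lesser nuisance is the behaviour at the exact thresholds $x_0=A_f,A_g$, handled by noting that the disadvantageous map is used only finitely often.
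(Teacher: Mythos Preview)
Your proof is correct and uses the same architecture as the paper's---invariance of the relevant intervals together with Corollary~\ref{cor1}/Lemma~\ref{lem1}---with only tactical differences (you unify the $x_0>A_g$ cases in (ii) into one application of Corollary~\ref{cor1} and treat $x_0=A_g$ by a stopping-time argument, whereas the paper works on $[A_g,K_g]$ directly via Lemma~\ref{lem1} with the uniform word $g^m\circ f$). One simplification worth noting for part~(iii): the paper gets the uniform escape bound by splitting at the single midpoint $C=(A_f+A_g)/2$ and using monotonicity of $f^m,g^m$ to see that a fixed iterate $f^m$ carries all of $[C,A_g)$ above $A_g$ while $g^m$ carries all of $(A_f,C]$ below $A_f$; this replaces your three-piece decomposition, the continuity arguments near the thresholds, and the padding step, so the ``main obstacle'' you flag is lighter than it looks.
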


\begin{proof}
\begin{romanlist}[(iii)]
\item Since $\lim_{n\to\infty} g^n(A_f)=0$, for any $\eta>0$ there exists an $m\in N$ such that $g^m(A_f)<\eta$. However, the function $g$ is strictly increasing, so $g^m$ is also strictly increasing, hence for any $x\le A_f$ we have $g^m(x)<\eta$. It follows that we can apply Corollary 1 to intervals $I_1=[0,\eta)$ and $I_2=[0,A_f]$. This can be done for an arbitrary small $\eta>0$, hence if $x_0\le A_f$, then $p_0=1$.
\item If $x_0 \in [A_g,K_g]$, then $X_n\in [A_g,K_g]$ for all $n \in N$. Let us denote $B_1$ the set $[K_f,K_g]$. If $x_0 \in B_1$ then $K_f=f(K_f)\le f(x)\le x\le K_g$ and also $K_f\le x\le g(x)\le g(K_g)=K_g$.  Hence $f(B_1)\subset B_1$ and $g(B_1)\subset B_1(*)$.\\
Next, $A_g<f(A_g)<K_f$, so $g^n(f(A_g))\nearrow K_g$ and hence for some $m\in N$ $g^m(f(K_g))\in [K_f,K_g]$. Moreover, function $g^m\circ f$ is increasing, so it follows that $g^m(f(x))\in [K_f,K_g]$ for all $x\in [A_g,K_g]$. \\
Combining $(*)$ and Lemma 1, applied to the sets $B=[A_g,K_g]$ and $B_1$ and $\lambda=p(1-p)^m$, we get $p_ 1=1$. \\
If $x_0\ge K_g$, then $f^n(x_0)\searrow K_f$ and we can apply Corollary 1 to the intervals $I_1=[K_f,K_g]$ and $I_2=[K_f,x_0]$.
\item If $x_0\in (A_f,A_g)$, then $f^m(x_0)>A_f$ and $g^m(x_0)<A_f$ for some $m\in N$. Hence $p_0>0$ and $p_1>0$. \\
It remains to show that $P(X_n\in (A_f,A_g) \ \forall n\in N)=0$.
Let $C=\frac{A_f+A_g}{2}$. Again, for some $m\in N$ we have $f^m(C)>A_g$ and since $f$ is increasing, $f^m(x)>A_g$ for any $x\ge C$. Similarly $g^m(x)<A_f$ for all $x\le C$. Hence we can apply Lemma 1 to the sets $[0,b]$ and $[0,A_f]\cup[A_g,b]$ with $\lambda=p^m(1-p)^m$.
\end{romanlist}
\end{proof}
\begin{theorem}
\label{th2}
Let $A_f<A_g<K_f<K_g$. Take a $\delta>0$ such that the sets
\begin{itemlist}[(3)]
\item $U_1=\{x\in (0,A_f): \min(x-f(x), x-g(x))\ge \delta\},$
\item $U_2=\{x\in (A_g,K_f): \min(f(x)-x, g(x)-x)\ge \delta\},$
\item $U_3=\{x\in (K_g,b): \min(x-f(x), x-g(x))\ge \delta\}$
\end{itemlist}
are nonempty. Assume that for every $x\in [0,b]$ there exists an $x^*\ge x$ such that $\min(x^*-f(x^*), x^*-g(x^*))\ge \delta$.
Then 
\begin{romanlist}[(iii)]
\item if $y_0\in [0,z_1]$, then $p_0\equiv P(\exists n_0 \in N: Y_n\in [0,w_1) \ \forall n\ge n_0)=1$,
\item if $y_0\in [w_2,b]$, then $p_1\equiv P(\exists n_0 \in N: Y_n\in (z_2,w_3) \ \forall n\ge n_0)=1$,
\item if $y_0\in (z_1,w_2)$, then $p_0>0$, $p_1>0$ and $p_0+p_1=1$, 
\end{romanlist}
where $w_i=\inf U_i,\ i=1,2,3$ and $z_j=\sup U_j,\ j=1,2$ (see Fig.~\ref{fig2}).

\end{theorem}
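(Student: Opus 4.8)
The plan is to mimic the structure of the proof of Theorem~\ref{th1}, replacing the deterministic absorbing intervals by intervals associated with the perturbed dynamics and using the hypothesis about the sets $U_1, U_2, U_3$ to guarantee that, with positive probability in a bounded number of steps, one ``pushes'' the orbit into a target region. The key technical point is that on the interval where $\chi$ acts as the identity, the perturbed map $y\mapsto \chi(h(y)+\varepsilon)$ with $h\in\{f,g\}$ moves the state by at least $\delta/2$ in the appropriate direction on $U_i$ whenever $\varepsilon$ falls in a fixed subinterval of $(-\delta,\delta)$ of positive probability, so finitely many such favorable steps carry any starting point past the relevant threshold or below the relevant level.

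First I would establish the analogue of the absorbing property $(*)$: I claim $[0,w_1)$ is (eventually) absorbing and so is a neighborhood $(z_2,w_3)$ of the carrying-capacity band. For (i), starting in $[0,z_1]$, note that for $y\in U_1$ we have $f(y)+\varepsilon_n \le y-\delta+\varepsilon_n < y$ as long as $\varepsilon_n<\delta$, and by choosing $\varepsilon_n$ in $(-\delta,-\delta/2)$, say, we decrease the state by at least $\delta/2$; iterating, after at most $\lceil 2z_1/\delta\rceil$ such favorable steps the orbit drops below $w_1=\inf U_1$. Once below $w_1$, the orbit cannot climb back above $A_f$: here I use that $\max(f(y),g(y))<y$ on $(0,A_f)$ together with the fact that perturbations are bounded by $\delta$ and $U_1$ lies strictly inside $(0,A_f)$ — more precisely $w_1>0$ and the perturbation cannot jump over the whole interval $[w_1, A_f]$ because near $w_1$ the drift $y-h(y)$ is already close to $\delta$. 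This gives an invariant (up to the $\chi$-truncation at $0$) interval $[0,w_1)$, and then Lemma~\ref{lem1} applied to $B=[0,z_1]$, $B_1=[0,w_1)$ with $\lambda = (\text{prob. of a good }\varepsilon)^{k}\cdot p^{k}$ for the appropriate $k$ yields $p_0=1$. Part (ii) is symmetric: the hypothesis ``for every $x$ there is $x^*\ge x$ with $\min(x^*-f(x^*),x^*-g(x^*))\ge\delta$'' ensures that from any high starting point one can descend into $(z_2,w_3)$, and $U_2\subset(A_g,K_f)$ provides an upward drift of size $\ge\delta$ that keeps the orbit, once inside $(z_2,w_3)$, from falling below $z_2=\sup U_2$; the $\chi$-cap at $b$ handles the upper end.

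For part (iii), starting in $(z_1,w_2)$, I would show $p_0>0$ and $p_1>0$ by exhibiting finite sequences of realized maps and perturbation values that drive the orbit into $[0,z_1]$, resp. into $[w_2,b]$: since $z_1<w_2$, a run of $g$-steps with suitably negative $\varepsilon$'s (using that on $(A_f,A_g)$ we have $g(y)<y$) pushes the state down past $z_1$, and a run of $f$-steps with suitably positive $\varepsilon$'s pushes it up past $w_2$; each such finite event has positive probability. Then $p_0+p_1=1$ follows by showing the orbit cannot stay in $(z_1,w_2)$ forever: applying Lemma~\ref{lem1} to $B=[0,b]$ and $B_1=[0,z_1]\cup[w_2,b]$ with a $\lambda>0$ obtained by combining the two one-sided pushes (whichever side of the unstable threshold the current point is on, one of the two pushes succeeds with probability bounded below), the orbit leaves $(z_1,w_2)$ almost surely, and by (i)–(ii) it is thereafter absorbed into one of the two target sets.

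The main obstacle I anticipate is making rigorous the claim that the two ``good'' intervals $[0,w_1)$ and $(z_2,w_3)$ are genuinely (eventually) invariant for the \emph{perturbed} process — i.e. that a single perturbation of size up to $\delta$ cannot jump the orbit out of the basin. This requires carefully choosing $\delta$ small enough (as the theorem's hypotheses implicitly allow, since $U_1,U_2,U_3$ must be nonempty) so that $w_1$, $z_1$, $w_2$, $z_2$, $w_3$ are correctly ordered and separated from the fixed points $0, A_f, A_g, K_f, K_g$ by more than $\delta$; the continuity of $f,g$ and the strict inequalities $f(y),g(y)\lessgtr y$ away from fixed points are what make this possible, but the bookkeeping of these margins is the delicate part, and it is here that Fig.~\ref{fig2} is meant to guide the reader.
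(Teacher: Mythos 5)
Your overall strategy is the paper's: make $[0,w_1)$ and $(z_2,w_3)$ invariant for the perturbed process via the drift condition at the endpoints of the $U_i$, reach them with probability bounded below in a bounded number of steps, and invoke Lemma~\ref{lem1}. But two steps are genuine gaps, not bookkeeping. The more serious one is in part (iii): your escape strategy fixes the map in advance (a run of $f$-steps with positive $\varepsilon$'s to climb past $w_2$, a run of $g$-steps with negative $\varepsilon$'s to drop past $z_1$). This fails on $(z_1,A_f]$ and on $[A_g,w_2)$, because there the definitions of $U_1$ and $U_2$ only guarantee that the \emph{minimum} of the two drifts is $<\delta$; at a given point one may have $y-f(y)\ge\delta$ while $y-g(y)<\delta$, and then every $f$-step gives $f(y)+\varepsilon_n<y-\delta+\delta=y$, so a run of $f$-steps cannot climb at all. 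Which map works can change from point to point along the trajectory. The paper's proof handles this by setting $\mu=\max_{x\in[Y_n,A_f]}\min(x-f(x),x-g(x))$, observing $\mu<\delta$ because the maximizer lies above $z_1=\sup U_1$, putting $\eta=(\delta-\mu)/2$, and at \emph{each} step choosing whichever of $f,g$ attains the minimum together with $\varepsilon_n\in(\delta-\eta,\delta)$; this yields a uniform increment $\ge\eta$ per step and hence a bounded number of steps. Your ``suitably positive $\varepsilon$'s'' also silently needs such a uniform $\eta$, and it comes precisely from the fact that the starting point exceeds $\sup U_1$.

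The second gap is in the invariance arguments themselves. The reason $[0,w_1)$ is invariant is that, by continuity, $\min(w_1-f(w_1),w_1-g(w_1))\ge\delta$ (since $w_1=\inf U_1$ is a limit of points of $U_1$), and by monotonicity $f(y)<f(w_1)\le w_1-\delta$ for $y<w_1$, whence $f(y)+\varepsilon_n<w_1$. Your justification (``the orbit cannot climb back above $A_f$,'' ``near $w_1$ the drift is close to $\delta$'') establishes something weaker than the stated conclusion that $Y_n$ eventually stays in $[0,w_1)$. Likewise, the upper end of $(z_2,w_3)$ is \emph{not} handled by the $\chi$-cap at $b$ (in general $w_3<b$); it is handled by the same drift computation at $w_3=\inf U_3$, and for starting points above $w_3$ by the auxiliary point $z_3=x^*$ supplied by the hypothesis, which gives an invariant interval $(z_2,z_3)$ with $f^n(z_3)\searrow K_f\in(z_2,w_3)$ so that Lemma~\ref{lem1} applies. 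Finally, Lemma~\ref{lem1} requires the conditioning set to be invariant, so you also need to check that $[0,z_1]$, $[w_2,b]$ and $(z_2,z_3)$ are invariant; this follows from the identical computation at $z_1$, $w_2$ and $z_3$, but it must be said.
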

\begin{figure}[h]
\begin{center}
\includegraphics[height=8cm,width=8cm]{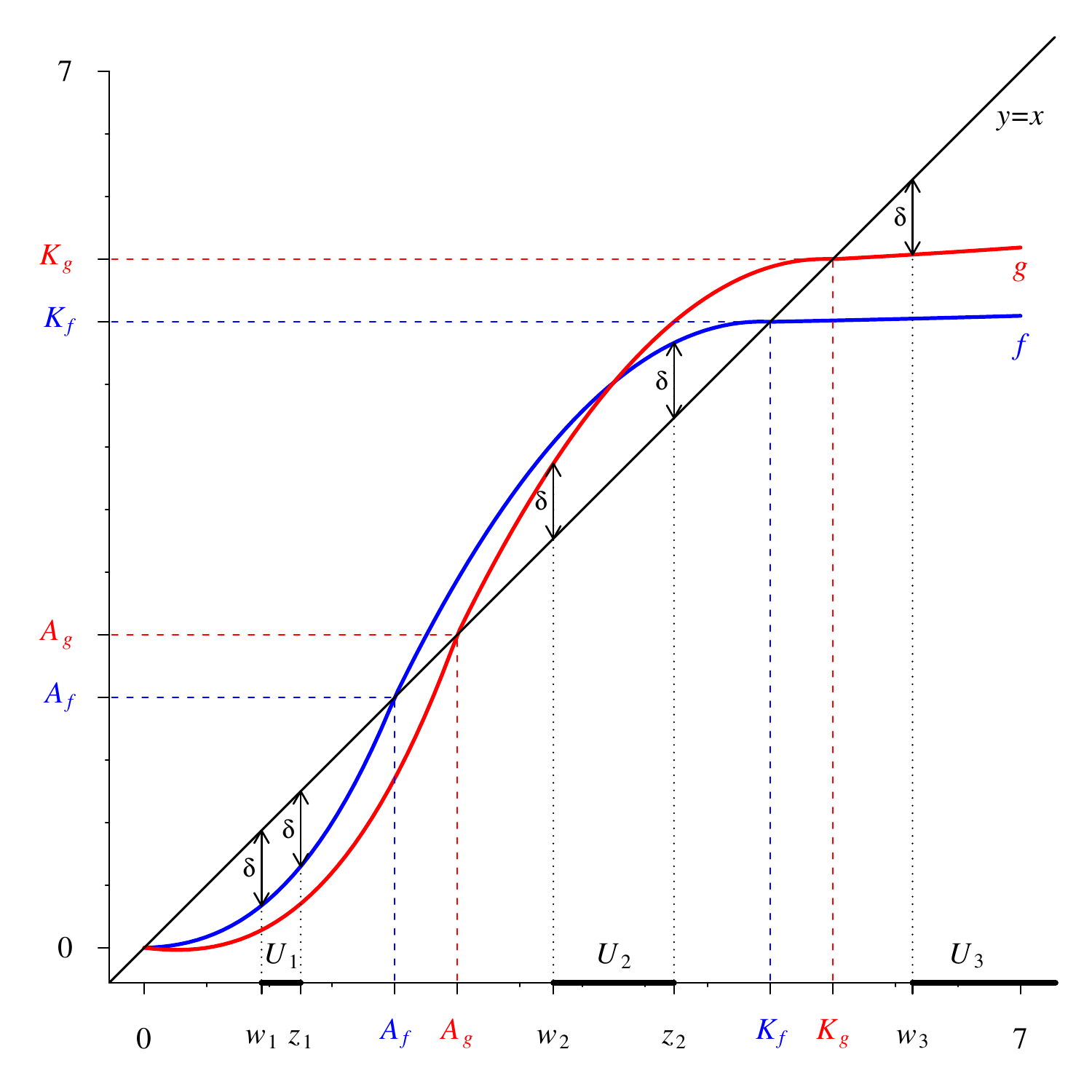}
\end{center}
\caption{Example of functions satisfying the conditions of Theorem \ref{th2}.}
\label{fig2}
\end{figure}
\begin{proof}
\begin{romanlist}[(iii)]
\item Let $d_1= \min(w_1-f(w_1),w_1-g(w_1))$. If $Y_n\in [0,w_1)$, then we have
\begin{equation}
f(Y_n)+\varepsilon_n< f(w_1)+\varepsilon_n< f(w_1)+\delta \le f(w_1)+d_1\le f(w_1)+w_1-f(w_1)=w_1.
\end{equation}
Similar inequalities hold also for the function $g$, hence $Y_{n+1}\in [0,w_1)$. Similarly, if $Y_n\in [0,z_1]$, then $Y_{n+1}\in [0,z_1]$. Moreover, $f^n(z_1)\searrow 0$, therefore we can apply Lemma 1 to the sets $B=[0,z_1]$ and $B_1=[0,w_1)$ with $\lambda=(pP(\varepsilon_n<0))^k$ for some $k\in N$.
\item Let $d_2=\min(f(z_2)-z_2,g(z_2)-z_2)$ and $d_3=\min(w_3-f(w_3),w_3-g(w_3))$. If $Y_n \in (z_2,w_3)$, then $Y_{n+1} \in (z_2,w_3)$, because 
\begin{equation}
f(Y_n)+\varepsilon_n>f(z_2)+\varepsilon_n>f(z_2)-\delta \ge f(z_2)-d_2\ge f(z_2)-(f(z_2)-z_2)=z_2
\end{equation} and
\begin{equation}
f(Y_n)+\varepsilon_n<f(w_3)+\varepsilon_n<f(w_3)+\delta \le f(w_3)+d_3\le f(w_3)+w_3-f(w_3)=w_3,
\end{equation}
similarly for the function $g$. If $Y_n>w_3$, then by the assumption there exists a $z_3>Y_n$ such that $\min(z_3-f(z_3),z_3-g(z_3))\ge \delta$. Again, it can be shown that if $Y_n\in (z_2,z_3)$, then $Y_{n+1}\in (z_2,z_3)$, and so we can apply \mbox{Lemma 1} (like in the case (i)) to the sets $B=(z_2,z_3)$ and $B_1=(z_2,w_3)$, because $f^n(z_3)\searrow K_f \in (z_2,w_3)$. The case $Y_n>w_2$ can be shown similarly. 
\item If $z_1<Y_n\le A_f$, then the probability that $Y_{n+k}\le z_1$ for some $k\in N$ is obviously non-zero. However, we will show in the following that also $Y_{n+m}>A_f$ with non-zero probability for some $m\in N$. \\
Let us denote $h$ the function defined by $h(x)=\min(x-f(x),x-g(x))$. Since $f$ and $g$ are continuous functions, $h$ is also continuous and it follows that it attains its maximum on the interval $[Y_n,A_f]$ (denote this maximum $\mu$). Next, let $\eta=\frac{\delta-\mu}{2}$. Since $z_1=\sup U_1$ and $\operatorname{argmax} \limits_{x\in[Y_n,A_f]} h(x)>z_1$, it follows that $\mu<\delta$, and so $0<\eta<\delta$. Hence the probability that $\varepsilon_n$ takes a value from $(\delta-\eta,\delta)$ is non-zero (because the density of $\varepsilon_n$ is positive on $(-\delta,\delta)$).\\
Without loss of generality, assume that $Y_n-f(Y_n)\le \mu$, and hence $Y_n\le f(Y_n)+\mu$ (if it is not true for the function $f$, then it is true for $g$). Hence with non-zero probability we have
\begin{equation}
Y_{n+1}=f(Y_n)+\varepsilon_n>f(Y_n)+\delta-\eta = f(Y_n)+\mu+\eta \ge Y_n+\eta
\end{equation}  
(the second equality is obtained by the definition of $\eta$). Similarly, we can continue with $Y_{n+1}, Y_{n+2}$, etc. and with the same $\eta$. Finally, if an $m\in N$ is sufficiently large,  $Y_{n+m}>Y_n+m\eta >A_f$ with a positive probability. But if $Y_{n+m}>A_f$, then obviously $Y_{n+m+l}>w_2$ with a non-zero probability for some $l \in N$.\\
Similarly, it can be shown that if $A_g\le Y_n<w_2$, then with a non-zero probability $Y_{n+k}>w_2$ for some $k\in N$ and also $Y_{n+m}<z_1$ for some $m\in N$. The case $Y_n \in (A_f,A_g)$ and the fact that $P(Y_n\in (z_1,w_2) \ \forall n \in N)=0$ can be shown as in Theorem \ref{th1}.
\end{romanlist}
\end{proof}
For $A_f<A_g<K_g<K_f$ or for the cases where $A_f=A_g$ or $K_f=K_g$,  very similar statements hold.

\section{Unimodal Allee maps}
\label{Sec3}

In this section, we assume that there exists a $B_f\in (A_f,K_f)$ such that the function $f$ is strictly increasing on $(0,B_f)$ and strictly decreasing on $(B_f,b)$ (denote $M_f=f(B_f)$ the maximum of the function $f$). We also assume that the function $f$ is continuous. An example of such Allee map is the function
\begin{equation}
\label{eq6}
f(x)=\frac{Gbx}{(x-T)^2+b},\ b,T>0,\ G>1, 
\end{equation}
see \cite{Asmussen}.
\begin{figure}[h]
\begin{center}
\includegraphics[height=8cm,width=8cm]{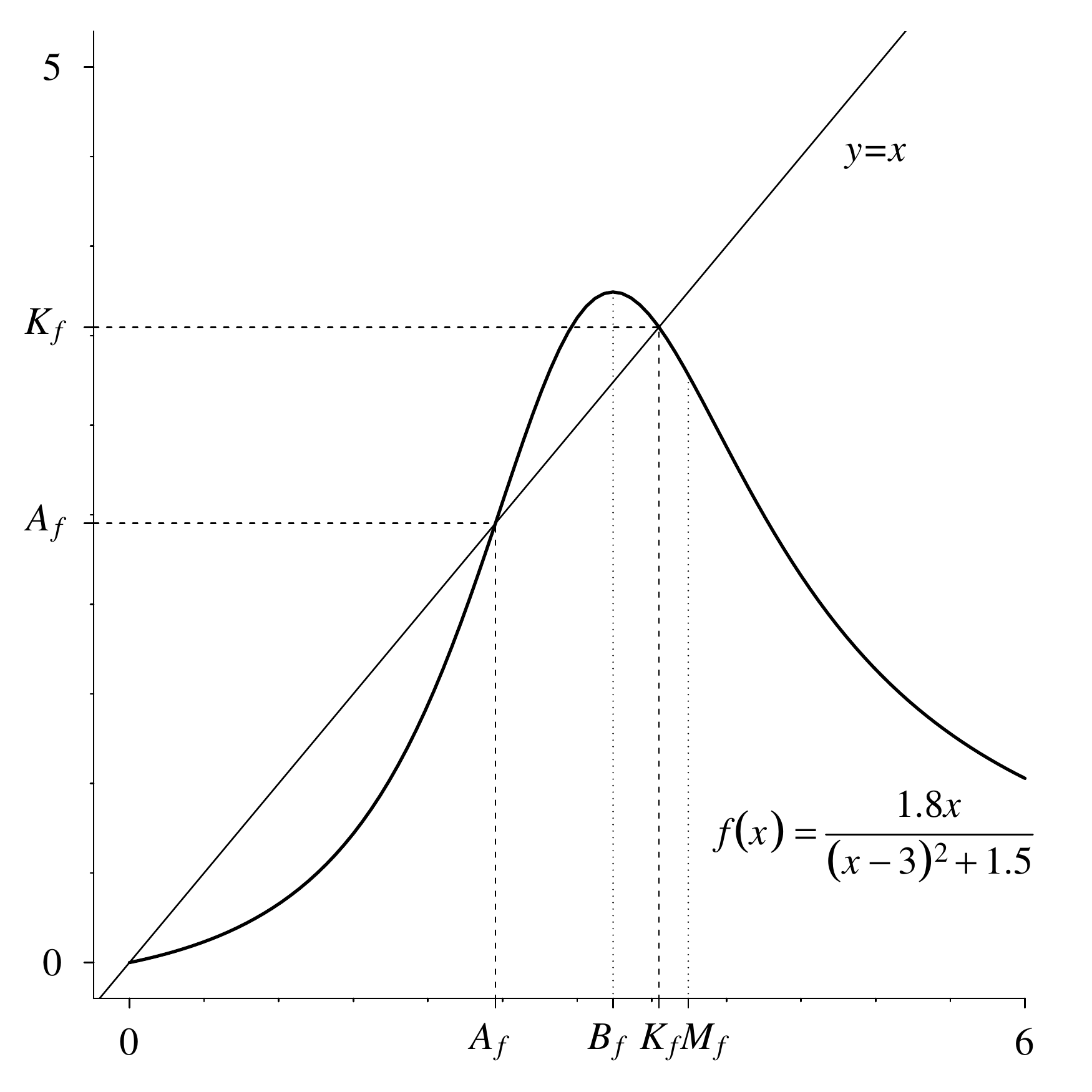}
\end{center}
\caption{Example of a unimodal Allee map.}
\label{fig3}
\end{figure}
Similarly, let $g$ be a continuous unimodal function with the maximum $M_g$ attained at the point $B_g$. Let $M=\max (M_f,M_g)$. For every $x\in [0,b]$ it holds that $\max (f(x),g(x))\le M$. Thus in this section we assume that $b=M$. Again, if $x_0<A_f$ in the deterministic model~(\ref{eq1}), then $\lim_{n\to \infty}x_n=0$, but if $x_0>A_f$, the behavior of the model can vary according to the function $f$. However, if $x_0>A_f$, a sufficient condition for the survival of the population  is $f(M_f)>A_f$ (in this case the population size $x_n$ never drops below the threshold value $A_f$).

As in Section~\ref{sec2}, we first need the following corollary of Lemma~\ref{lem1}.
\begin{corollary}
\label{cor2}
Let the sets $E$ and $F$ be  such that there exists $m \in N$ with the property 
\begin{equation}
P(X_{n+m}\in F|X_n\in E)\ge \lambda
\end{equation}
for some $\lambda>0$. Then 
\begin{equation*}
P(\{n \in N: X_n \in F\}\text{ is unbounded}\mid\{n \in N: X_n \in E\} \text{ is unbounded})=1.
\end{equation*}
(If the process returns infinitely many times to the set $E$, then it almost surely returns infinitely many times to the set $F$).
\end{corollary}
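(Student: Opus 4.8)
The plan is to reduce the claim to a conditional Borel--Cantelli estimate that is driven by the Markov property. Write $G$ for the conditioning event ``$\{n\in N:X_n\in E\}$ is unbounded'', i.e.\ $X_n\in E$ for infinitely many $n$; if $P(G)=0$ the conclusion is vacuous, so assume $P(G)>0$. First I would introduce the successive return times of the process to $E$, forced to lie at least $m$ apart: set $\tau_0=\inf\{n\ge 0:X_n\in E\}$ and $\tau_{k+1}=\inf\{n\ge\tau_k+m:X_n\in E\}$, with $\inf\emptyset=\infty$ and $\tau_{k+1}=\infty$ once $\tau_k=\infty$. Two elementary observations will be used: $G=\{\tau_k<\infty\text{ for every }k\}$, and on $G$ the ``target times'' $\tau_k+m$ are \emph{strictly} increasing in $k$, because $\tau_{k+1}\ge\tau_k+m$.

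Next I would record the hypothesis in its uniform form (the form already used in Lemma~\ref{lem1}): $P(X_{n+m}\in F\mid X_n\in E)\ge\lambda$ means $P(X_{n+m}\in F\mid\mathcal F_n)\ge\lambda$ on the event $\{X_n\in E\}$, where $\mathcal F_n=\sigma(X_0,\dots,X_n)$. Put $A_k=\{\tau_k<\infty\}\cap\{X_{\tau_k+m}\in F\}$. The core step is to prove the estimate
\begin{equation}
P\bigl(A_J^c\cap A_{J+1}^c\cap\cdots\cap A_{J+\ell}^c\cap\{\tau_{J+\ell}<\infty\}\bigr)\le(1-\lambda)^{\ell+1},\qquad J\in N,\ \ell\ge 0.\label{eqBC}
\end{equation}
This is done by induction on $\ell$: one decomposes the event over $\{\tau_{J+\ell}=n\}\in\mathcal F_n$; on this set $X_n\in E$, and since $\tau_{J+\ell-1}+m\le\tau_{J+\ell}=n$ the event $A_J^c\cap\cdots\cap A_{J+\ell-1}^c$ is $\mathcal F_n$-measurable there, so applying the Markov property to $\{X_{n+m}\in F\}$ and summing over $n$ multiplies the bound for $\ell-1$ by at most $1-\lambda$ (the case $\ell=0$ is the same computation with the trivial event). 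I expect this bookkeeping to be the main obstacle: the visits to $E$ happen at \emph{random} times, so one cannot apply Borel--Cantelli to independent events directly, and one must check carefully that the Markov property may legitimately be invoked at each random return time $\tau_{J+\ell}$.

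Finally I would deduce the corollary from (\ref{eqBC}). Letting $\ell\to\infty$ yields $P\bigl(\bigcap_{k\ge J}A_k^c\cap\{\tau_k<\infty\ \forall k\ge J\}\bigr)=0$ for every $J$, hence $P\bigl(G\cap\bigcap_{k\ge J}A_k^c\bigr)=0$ since $G\subset\{\tau_k<\infty\ \forall k\ge J\}$; summing over $J$ shows that, almost surely on $G$, infinitely many of the events $A_k$ occur. But on $G$ every $\tau_k$ is finite and the times $\tau_k+m$ are pairwise distinct, so ``infinitely many $A_k$ occur'' forces $X_n\in F$ for infinitely many $n$, i.e.\ $\{n\in N:X_n\in F\}$ is unbounded. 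Therefore $P(\{n\in N:X_n\in F\}\text{ is unbounded}\mid G)=1$, which is the assertion.
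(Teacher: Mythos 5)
Your proof is correct. There is no real ``paper proof'' to compare it with: the paper states this result as a corollary of Lemma~\ref{lem1} without argument, and the proof of that lemma is itself delegated to \cite{Jankova}, so your stopping-time/conditional Borel--Cantelli argument is a legitimate and complete way to supply what the paper omits. The key estimate $P(A_J^c\cap\cdots\cap A_{J+\ell}^c\cap\{\tau_{J+\ell}<\infty\})\le(1-\lambda)^{\ell+1}$ is sound as you set it up: the enforced spacing $\tau_{k+1}\ge\tau_k+m$ does double duty, making $A_J^c\cap\cdots\cap A_{J+\ell-1}^c\cap\{\tau_{J+\ell}=n\}$ measurable with respect to $\mathcal{F}_n$ so the Markov property can be applied at time $n$, and making the target times $\tau_k+m$ pairwise distinct so that ``infinitely many $A_k$ occur'' really yields infinitely many indices $n$ with $X_n\in F$. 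One point you should state rather than leave implicit: you upgrade the hypothesis from the averaged conditional probability $P(X_{n+m}\in F\mid X_n\in E)\ge\lambda$ to the pointwise bound $P(X_{n+m}\in F\mid\mathcal{F}_n)\ge\lambda$ on $\{X_n\in E\}$, and it is this pointwise form that your computation at the random times $\tau_k$ actually uses. The upgrade is the right reading --- it is exactly what the paper verifies in every application (the bounds $p$, $p^r$, $(p(1-p))^m$ hold for each individual state of $E$, not merely on average) --- but since the literal statement of the corollary only asserts the averaged inequality, a sentence acknowledging and justifying this interpretation would make the proof airtight.
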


\begin{theorem}
\label{th3}
Let $A_f<A_g$, $f$ be differentiable and $|f^{'}(x)|<1$ for every $x\in(B_f,M_f)$. Assume that there exist  functions $h_1, h_2, \ldots h_m \in \{f,g\}$ for which $h_1\circ h_2\circ \ldots \circ h_m (K_f) < A_f.$ Then $P(\lim_{n \to \infty} X_n=0)=1$ for every $x_0$.
\end{theorem}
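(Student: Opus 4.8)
The plan is to show that, whatever $x_0$ is, the chain almost surely reaches $[0,A_f)$, and then to invoke Lemma~\ref{lem1}. The first ingredient is an absorption observation: once $X_n<A_f$ we have $f(x)<x$ on $(0,A_f)$ and, because $A_f<A_g$, also $g(x)<x$ on $(0,A_f)\subseteq(0,A_g)$; hence $(X_m)_{m\ge n}$ is nonincreasing and bounded below, so it converges, and its limit — being a fixed point of $f$ or of $g$ lying in $[0,A_f)$ — equals $0$. So it suffices to prove $P(\exists\,n_0:X_{n_0}\in[0,A_f))=1$ for every $x_0\in[0,b]$.

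The core of the argument is the deterministic iteration of $f$ alone. The mean value theorem together with $|f'|<1$ on $(B_f,M_f)$ gives $M_f-f(M_f)=|f(B_f)-f(M_f)|<M_f-B_f$, so $c:=f(M_f)>B_f$, and $c<K_f$ since $M_f>K_f$. Hence $f([B_f,M_f])=[c,M_f]\subseteq[B_f,M_f]$, so $[c,M_f]$ is $f$-invariant; on it $f$ is strictly decreasing, so $g_2:=f^2$ is strictly increasing on $[c,M_f]$ and fixes $K_f$. I would then show $K_f$ is its only fixed point there: another fixed point $p$ of $g_2$ would give a genuine $2$-cycle $\{p,f(p)\}$ with both points in the \emph{open} interval $(B_f,M_f)$, so $|g_2'(p)|=|f'(p)|\,|f'(f(p))|<1$; but for $p$ chosen maximal among the fixed points of $g_2$ below $K_f$ (such a $p$ exists and is $<K_f$ because $(f'(K_f))^2<1$) one has $g_2(x)>x$ for $x$ slightly above $p$, forcing $g_2'(p)\ge1$ — a contradiction (the case $p>K_f$ reduces to this via $f$). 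Thus every $g_2$-orbit, hence every $f$-orbit, in $[c,M_f]$ converges monotonically to $K_f$. Following the increasing branch then gives: for every $x\in(A_f,M_f]$ the $f$-orbit of $x$ stays above $A_f$, enters $[c,M_f]$ in finitely many steps, and converges to $K_f$; and for $x\in(M_f,b]$ one has $f(x)<c<K_f$, so either $f(x)\le A_f$ or $f(x)\in(A_f,M_f)$.

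Next I would construct, for each $x\in[0,b]$, a finite word in $\{f,g\}$ whose composition sends $x$ strictly below $A_f$: for $x<A_f$, the map $f$ itself; for $x=A_f$, the map $g$ (as $g(A_f)<A_f$); for $x\in(A_f,M_f]$, iterate $f$ until the orbit enters $(K_f-\gamma,K_f+\gamma)$ and then apply $\phi:=h_1\circ\cdots\circ h_m$, where $\gamma>0$ is chosen by continuity of $\phi$ and $\phi(K_f)<A_f$ so that $\phi$ maps this neighbourhood below $A_f$; the case $x\in(M_f,b]$ is reduced to a previous one by one extra application of $f$. Each such composition is continuous and lands below $A_f$ at $x$, hence on an open neighbourhood $U_x$ of $x$; by compactness of $[0,b]$ finitely many $U_{x_i}$ cover it, and composing further copies of $g$ on the outside to bring all the words to a common length $k$ (harmless, since $g$ maps $[0,A_f)$ into itself) yields $P(X_{n+k}\in[0,A_f)\mid X_n=x)\ge(\min(p,1-p))^{k}=:\lambda>0$ uniformly in $x\in[0,b]$. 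Lemma~\ref{lem1} with $B=[0,b]$ and $B_1=[0,A_f)$ then gives $P(\exists\,n_0:X_{n_0}\in[0,A_f))=1$, and together with the absorption observation this proves $P(\lim_{n\to\infty}X_n=0)=1$.

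I expect the genuine obstacle to be the second step: noticing that $|f'|<1$ on $(B_f,M_f)$ forces $f(M_f)>B_f$ — the inequality that prevents the deterministic map from behaving wildly — and then ruling out $2$-cycles of $f$, which is exactly where differentiability and the precise estimate on the hump are used. Everything else (continuity, compactness, Lemma~\ref{lem1}) is soft.
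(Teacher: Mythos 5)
Your proposal is correct, and it reaches the conclusion by a genuinely different probabilistic route than the paper. The paper never produces a single uniform ``word'': instead it applies Corollary~\ref{cor2} (the infinitely-often-return corollary) in a cascade of four set pairs, funneling the process $[0,M]\to[0,M_f]\to[0,A_f)\cup[C,M_f]\to[0,A_f)\cup(K_f-\eta,K_f+\eta)\to[0,A_f)$, using the invariance of $[0,A_f)$ and of $[C,M_f]$ under $f$ at each stage, and then concludes via the Section~\ref{sec2} argument once the process is below $A_f$. You instead build, for each $x$, one finite composition in $\{f,g\}$ landing below $A_f$, extend it to a neighbourhood by continuity, extract a finite subcover of $[0,b]$, and pad with $g$ (legitimate, since $g$ preserves $[0,A_f)$ because $A_f<A_g$) to get a uniform $k$-step bound $P(X_{n+k}\in[0,A_f)\mid X_n=x)\ge\lambda$, so that Lemma~\ref{lem1} applies in one shot. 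This buys you a cleaner probabilistic skeleton and, as a bonus, sidesteps the uniform-$r$ issue implicit in the paper's inequality~(\ref{eq10}). You also prove rather than assert the deterministic claim $f^n(x)\to K_f$ on $(A_f,M_f]$: the observation $f(M_f)>B_f$ via the mean value theorem, the resulting invariance of $[c,M_f]$, and the exclusion of $2$-cycles by the derivative-at-the-extremal-fixed-point argument are all sound and are details the paper compresses into the parenthetical ``since $|f'(x)|<1$''. One stylistic caveat: in the absorption step, the limit of a monotone orbit under random choices is not literally ``a fixed point of $f$ or of $g$'' by fiat --- you should say that whichever map is applied infinitely often along the path fixes the limit by continuity (or note that $\max(f(x)-x,\,g(x)-x)$ is bounded away from $0$ near any candidate limit $L>0$); either repair is immediate, so this is not a gap.
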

\textbf{Example.} Consider two functions $f(x)=\frac{2.2x}{(x-3)^2+2}$ and $g(x)=\frac{1.3x}{(x-3.3)^2+1}$ (see Fig~\ref{fig4}).
\begin{figure}[h]
\begin{center}
\includegraphics[height=8cm,width=8cm]{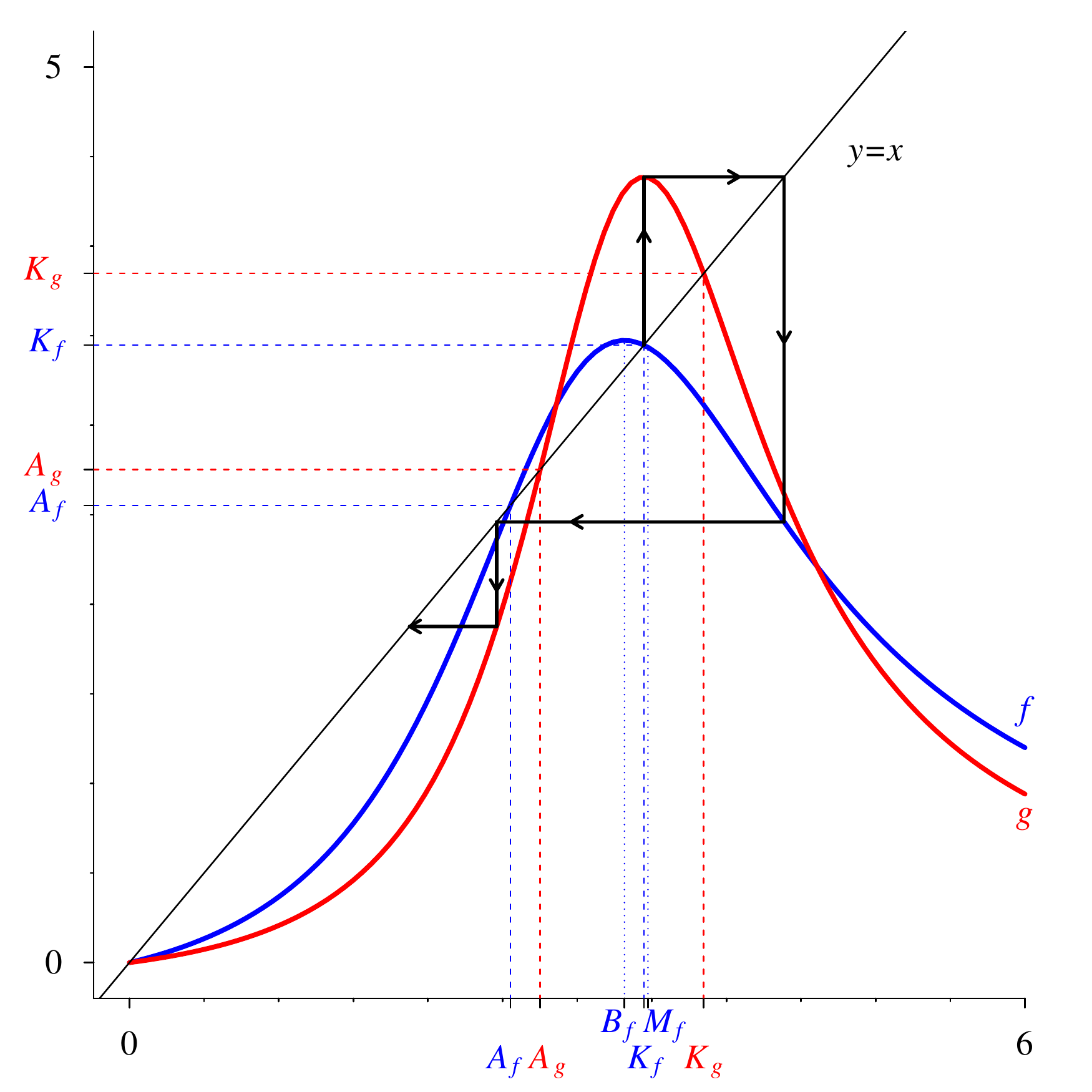}
\end{center}
\caption{Example of functions satisfying the conditions of  Theorem~\ref{th3}.}
\label{fig4}
\end{figure}
Here 
$A_f=3-\sqrt{0.2}\approx 2.553$, 
$K_f=3+\sqrt{0.2}\approx 3.447$, 
$A_g=3.3-\sqrt{0.3}\approx 2.752$ and 
$K_g=3.3+\sqrt{0.3}\approx 3.848$ (hence $A_f<A_g$).
Next, $g(f(g(K_f)))\approx 1.876 < A_f$, so in this case $h_1=g, h_2=f$ and $h_3=g$. Moreover, it can be shown that the function $f$ is concave on the interval $(B_f,M_f)$, hence its first derivative is decreasing on this interval. Since $f^{'}(B_f)=0$ and $f^{'}(M_f)\approx -0.475>-1$, the condition $|f^{'}(x)|<1 $ is satisfied for every $x\in (B_f,M_f)$. Therefore, for the process $\{X_n\}_{n=0}^\infty$ generated by these two functions, we have $P(\lim_{n \to \infty} X_n=0)=1$ for every $x_0\in [0,b]$.
\begin{proof}
Since $M_f$ is the maximum of the function $f$, we obviously have
\begin{equation}
\label{eq7}
P(X_{n+1}\in [0,M_f] \mid X_n\in [0,M])\ge p>0.
\end{equation}
Next, $g(A_f)<A_f$ (because $A_f<A_g$) and since $g$ is continuous, there is some $C \in (A_f,B_f)$ such that $g(x)<A_f$ for every $x \in [A_f,C)$. Hence
\begin{equation}
\label{eq8}
P(X_{n+1}\in [0,A_f) \mid X_n\in [0,C))\ge 1-p > 0.
\end{equation}
Denote $h(x)$ the function defined by $h(x)=h_1\circ h_2\circ \ldots \circ h_m(x)$. This function is continuous (because $f$ and $g$ are continuous), hence there is an $\eta>0$ such that $h(x)<A_f$ for any $x\in (K_f-\eta,K_f+\eta)$. Consequently
\begin{equation}
\label{eq9}
P(X_{n+m}<A_f \mid X_n \in (K_f-\eta,K_f+\eta))>(p(1-p))^m>0.
\end{equation}
Moreover, $ \lim_{n\to \infty} f^n(x)=K_f$ for any $x \in (A_f,M_f]$ (since $|f^{'}(x)|<1$ for every $x \in (B_f,M_f)$), and hence there is an $r\in N$ such that $f^r(x)\in(K_f-\eta,K_f+\eta)$ for any $x\in [C,M_f]$, and so 
\begin{equation}
\label{eq10}
P(X_{n+r} \in (K_f-\eta,K_f+\eta) \mid X_n \in [C,M_f])>p^r>0.
\end{equation}
Now we can gradually apply Corollary~\ref{cor2} to the sets
\begin{itemlist}[(4)]
\item  $[0,M]$ and $[0,M_f]$ (by (\ref{eq7})),
\item  $[0,M_f]$ and $[0,A_f)\cup[C,M_f]$ (if $X_n\in [0,C)$, we can use (\ref{eq8}) and if $X_n\in [C,M_f]$, then $f(X_n)\in [C,M_f]$),
\item  $[0,A_f)\cup [C,M_f]$ and $[0,A_f)\cup (K_f-\eta,K_f+\eta)$ (by (\ref{eq10}) and by the fact that $f([0,A_f))\subseteq [0,A_f)$),
\item  $[0,A_f)\cup (K_f-\eta,K_f+\eta)$ and $[0,A_f)$ (by (\ref{eq9})).
\end{itemlist}
This means that the process $\{X_n\}_{n=0}^\infty$ returns to $[0,A_f)$ infinitely many times, and hence, from previous results (in Section~\ref{sec2}), we have $P(\lim_{n\to \infty} X_n=0)=1$.
\end{proof}
\begin{theorem}
\label{th4}
Let $A_g<A_f$, $f$ be differentiable and $|f^{'}(x)|<1$ for every $x \in (B_f,M_f)$. Assume there exist functions $h_1, h_2,\ldots, h_m \in \{f,g\}$ for which $h_1\circ h_2\circ\ldots\circ h_m(K_f)<A_f$. If, in addition, $f(g(A_f))\neq A_f$, then for every $x_0$ $P(\lim_{n\to\infty} X_n=0)=1$.
\end{theorem}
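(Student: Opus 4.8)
The plan is to prove that the process $\{X_n\}$ almost surely enters the interval $[0,A_g)$, after which extinction is automatic. Since $A_g<A_f$, for every $x\in(0,A_g)$ we have $f(x)<x$ and $g(x)<x$, so $[0,A_g)$ is forward invariant under both maps; once $X_{n_0}\in[0,A_g)$ the sequence $(X_n)_{n\ge n_0}$ is nonincreasing, hence converges, and its limit is a common fixed point of $f$ and $g$ lying in $[0,A_g)$, i.e.\ it is $0$. So it suffices to show $P(\exists n_0\in N:X_{n_0}\in[0,A_g))=1$. Applying Corollary \ref{cor2} to the pair $[0,M],[0,M_f]$ (with the estimate (\ref{eq7})) and then to the pair $[0,M_f],[0,A_g)$, this reduces to producing a fixed $l\in N$ and $\lambda>0$ with $P(X_{n+l}\in[0,A_g)\mid X_n\in[0,M_f])\ge\lambda$. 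Since $f([0,b])\subseteq[0,M_f]$ and $f([0,A_g))\subseteq[0,A_g)$, it is enough to exhibit, for every $x\in[0,M_f]$, a word in $\{f,g\}$ of length at most a fixed $l$ that maps $x$ into $[0,A_g)$; shorter words are padded by copies of $f$, which keeps the image in $[0,A_g)$, and the probability of such a word is bounded below by $(\min(p,1-p))^{l}$.

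Fix a small $\epsilon_0>0$ with $A_f+\epsilon_0<B_f$ (its exact size will be pinned down in the next paragraph). Most of the required words are constructed exactly as in the proof of Theorem \ref{th3}. Writing $h=h_1\circ\cdots\circ h_m$, choose $\eta>0$ so small that, by continuity and $h(K_f)<A_f$, one has $h\big((K_f-\eta,K_f+\eta)\big)\subseteq[0,A_f-\delta']$ for some $\delta'>0$ (this strengthens (\ref{eq9})). On $[0,A_f)$ the iterates $f^{n}$ decrease monotonically to $0$, so by Dini's theorem $f^{N}$ maps the compact set $[0,A_f-\epsilon_0]$ (and also $[0,A_f-\delta']$) into $[0,A_g)$ for $N$ large. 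On $[A_f+\epsilon_0,M_f]$, estimate (\ref{eq10}) (with $C=A_f+\epsilon_0$) gives an $r\in N$ with $f^{r}([A_f+\epsilon_0,M_f])\subseteq(K_f-\eta,K_f+\eta)$. Hence every $x\in[0,A_f-\epsilon_0]$ is taken into $[0,A_g)$ by $f^{N}$, and every $x\in[A_f+\epsilon_0,M_f]$ by $f^{r}$ followed by $h$ followed by $f^{N}$; the only points of $[0,M_f]$ left uncovered are those in the small neighbourhood $U=(A_f-\epsilon_0,A_f+\epsilon_0)$ of $A_f$, where neither $f^{n}\to 0$ nor $f^{n}\to K_f$ is uniform because $A_f$ is fixed by $f$.

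This is where the hypothesis $f(g(A_f))\neq A_f$ enters, and it is the crux of the proof. Since $f\circ g$ is continuous and $f(g(A_f))\neq A_f$, after shrinking $\epsilon_0$ we may find $\rho\in(0,B_f-A_f)$ with $|f(g(x))-A_f|\ge\rho$ for all $x\in U$. As $f(g(U))$ is a connected subset of $[0,M_f]$ disjoint from $(A_f-\rho,A_f+\rho)$, it lies entirely in $[0,A_f-\rho]$ or entirely in $[A_f+\rho,M_f]$, the alternative being decided by the sign of $f(g(A_f))-A_f$. If $f(g(A_f))<A_f$, then $g$ followed by $f$ followed by $f^{N}$ maps every $x\in U$ into $[0,A_g)$. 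If $f(g(A_f))>A_f$, then $g$ followed by $f$ lands in the compact set $[A_f+\rho,M_f]$, which is disjoint from $A_f$, so it is covered by the construction of the previous paragraph (apply further $f^{r}$, then $h$, then $f^{N}$). In either case $U$ is covered by a word of bounded length, which completes the uniform estimate $P(X_{n+l}\in[0,A_g)\mid X_n\in[0,M_f])\ge\lambda$ and, by the reduction in the first paragraph, gives $P(\lim_{n\to\infty}X_n=0)=1$ for every $x_0$.

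The main obstacle is precisely the neighbourhood $U$: a single deterministic map $f$ (or $g$) pushes points off $A_f$ but at a rate that degenerates as the starting point approaches $A_f$, so no one map escapes $U$ in a uniformly bounded number of steps. The point of $f(g(A_f))\neq A_f$ is that the composite $f\circ g$ then moves all of $U$ a definite distance away from $A_f$; if instead $f(g(A_f))=A_f$ — for instance if $A_f=K_g$ with $A_f$ an attracting fixed point of $g$ — one expects that for small $p$ the process can accumulate at the level $A_f$, so that the conclusion $X_n\to 0$ genuinely breaks down. The remaining work is routine: checking the uniform-convergence statements near $A_f$ and the elementary facts about $f$ on $(A_f,M_f]$ that are already used in Theorems \ref{th2} and \ref{th3}.
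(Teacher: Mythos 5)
Your proof is correct and follows essentially the same route as the paper's: the hypothesis $f(g(A_f))\neq A_f$ is used in exactly the same way (to push a whole neighbourhood of $A_f$ a definite distance off $A_f$ via $f\circ g$, with the same two-case split), and the remaining pieces (iterating $f$ into $[0,M_f]$, toward $0$ below $A_f$ and toward $K_f$ above it, then applying $h$ near $K_f$) coincide with the paper's inequalities (\ref{eq7})--(\ref{eq13}). The only difference is bookkeeping: you compactify everything into a single uniformly bounded word and one application of Lemma~\ref{lem1}/Corollary~\ref{cor2}, while the paper chains Corollary~\ref{cor2} through a sequence of intermediate sets; your closing observation that $[0,A_g)$ is forward invariant with monotone decay to $0$ is a slightly more self-contained finish than the paper's appeal to Section~\ref{sec2}.
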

\begin{proof}
Since $h(K_f)=h_1\circ\ldots\circ h_m(K_f)<A_f$, it follows that $h(K_f)<D$ for some $D<A_f$. As $h$ is continuous, we have 
\begin{equation}
\label{eq11}
P(X_{n+m}<D\mid X_n\in (K_f-\eta,K_f+\eta))>(p(1-p))^m>0 
\end{equation}
for some $\eta>0$.
Since $f(g(A_f))\neq A_f$, two cases are possible: either $f(g(A_f))<A_f$ or $f(g(A_f))>A_f$.
\\ \\
If $f(g(A_f))<A_f$, then $f(g(A_f))<C$ for some $C<A_f$ and from the continuity of $f\circ g$, we have $f(g(x))<C$ for any $x \in (A_f-\xi,A_f+\xi)$, where $0<\xi<\min(A_f-C,A_f-D)$. 
\begin{figure}[h]
\begin{center}
\includegraphics{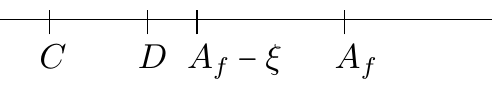}
\end{center}
\caption{Example of possible positions of the points $C, D, A_f-\xi$ and $A_f$.}
\label{fig5}
\end{figure}
Consequently 
\begin{equation}
\label{eq12}
P(X_{n+2}\in [0,A_f-\xi] \mid X_n \in [0,A_f+\xi))\ge p(1-p)>0.
\end{equation}
Again, we can apply Corollary~\ref{cor2} to the sets
\begin{itemlist}[(5)]
\item  $[0,M]$ and $[0,M_f]$ (by (\ref{eq7})),
\item  $[0,M_f]$ and $[0,A_f-\xi]\cup[A_f+\xi,M_f]$ (by (\ref{eq12}) if $X_n \in [0,A_f)$ and by the fact, that $f([A_f+\xi,M_f])\subseteq [A_f+\xi,M_f]$ if $X_n\in[A_f+\xi,M_f]$),
\item  $[0,A_f-\xi]\cup[A_f+\xi,M_f]$ and $[0,A_f-\xi]\cup(K_f-\eta,K_f+\eta)$ (as in the preceding theorem),
\item  $[0,A_f-\xi]\cup(K_f-\eta,K_f+\eta)$ and $[0,A_f-\xi]$ (by (\ref{eq11}), because $D<A_f-\xi$),
\item  $[0,A_f-\xi]$ and $[0,A_g)$ (since $f^n(A_f-\xi)\searrow 0$, and so for some $s\in N$ $f^s(x)<A_g$ for an arbitrary $x\in [0,A_f-\xi]$).
\end{itemlist}
It follows that the process $\{X_n\}_{n=0}^\infty$ returns to the set $[0,A_g)$ infinitely many times, hence $P(\lim_{n\to \infty} X_n=0)=1$. 
\\ \\
If $f(g(A_f))>A_f$, then $f(g(A_f))>C$ for some $C>A_f$ and also $f(g(A_f))\le M_f$ (since $M_f$ is the maximum of the function $f$). Therefore
\begin{equation}
\label{eq13}
P(X_{n+2}\in [A_f+\xi,M_f] \mid X_n \in (A_f-\xi,A_f+\xi))\ge p(1-p)>0, 
\end{equation}
where $0<\xi<\min(A_f-D,C-A_f)$. The rest of the proof is similar to the proof of the first case.
\end{proof}
\textbf{Example.} 
Let $f(x)=\frac{1.155x}{(x-2.8)^2+1.05}$ and $g(x)=\frac{1.3x}{(x-2.9)^2+1}$ (see Fig.~\ref{fig6}).
\begin{figure}[h!]
\begin{center}
\includegraphics[height=8cm,width=8cm]{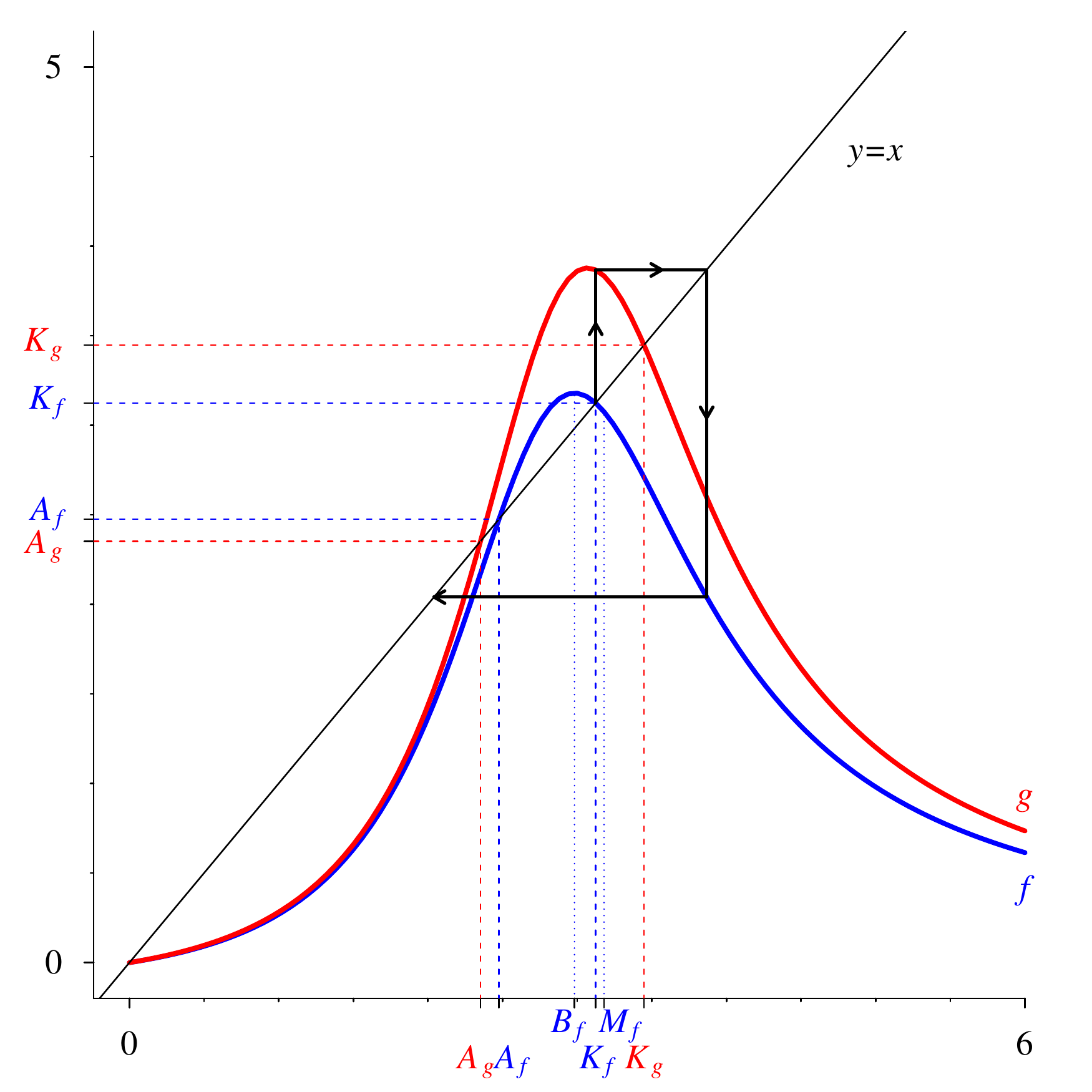}
\end{center}
\caption{Example of functions satisfying the conditions of  Theorem~\ref{th4}.}
\label{fig6}
\end{figure}
Here 
$A_f\approx 2.476,$ 
$K_f\approx 3.124,$ 
$A_g\approx 2.352$ and 
$K_g\approx 3.448$ (hence $A_g<A_f$). 
Further, $f(g(A_f))\approx 2.986\neq A_f$ and $f(g(K_f))\approx 2.041<A_f$. Moreover, the function $f$ is again concave on $(B_f,M_f)$ and $f^{'}(M_f)\approx -0.989 > -1$, hence $|f^{'}(x)|<1$ for every $x\in (B_f,M_f)$. Thus, from Theorem~\ref{th4} it follows that $P(\lim_{n \to \infty} X_n=0)=1$. 

Since $f^n(x)\in [A_f,M_f]$ for every $n\in N$ and for an arbitrary $x_0\in (A_f,M_f)$, the population persists in the deterministic model (model~(\ref{eq1})) generated by the function $f$; similarly the population persists in the model generated by $g$. However, if we combine these models, the population becomes almost surely extinct. This resembles results known as the Parrondo's paradox (see e.g. \cite{Parrondo}).
\\

 A very similar theorem holds for the process $\{Y_n\}_{n=0}^\infty$.
\begin{theorem}
\label{th5}
Let $f$ be a differentiable function and $|f^{'}(x)|<1$ for an arbitrary $x \in (B_f,M_f)$. Assume there exist functions $h_1, h_2,\ldots, h_m \in \{f,g\}$ for which $h_1\circ h_2\circ\ldots\circ h_m(K_f)<A_f$. If, in addition, the set 
\begin{center}
$U=\{x \in [0,\min(A_f,A_g)]: \min(x-f(x),x-g(x))\ge\delta\}$
\end{center} 
is nonempty (see Fig.~\ref{fig7}), then $P(\exists n_0 \in N: Y_n\in[0,\inf U) \ \forall n\ge n_0)=1$ for every $x_0$.
\end{theorem}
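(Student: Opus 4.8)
The plan is to show that $\{Y_n\}$ almost surely enters $[0,\inf U)$ and then never leaves it. Put $w=\inf U$. Since $\min(x-f(x),x-g(x))$ is continuous, the set $U$ is closed, so $w$ is attained in $U$; hence $w-f(w)\ge\delta$, $w-g(w)\ge\delta$, and, because $f,g\ge0$, $\delta\le w\le\min(A_f,A_g)$, with $w<A_f$ since $f(w)<w$ whereas $f(A_f)=A_f$. Consequently $f$ and $g$ are increasing on $[0,w]\subset[0,\min(B_f,B_g))$, and $f^{\,k}(x)\searrow 0$ for every $x\in[0,A_f)$ (the only fixed point of $f$ in $[0,A_f)$ is $0$). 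If $Y_n\in[0,w)$, then $f(Y_n)+\varepsilon_n<f(w)+\delta\le f(w)+(w-f(w))=w$, and likewise with $g$; since $0<w<b$ and $\chi$ carries $(-\infty,w)$ into $[0,w)$, we get $Y_{n+1}\in[0,w)$ almost surely. Thus $[0,w)$ is absorbing, and it remains to prove that $\{Y_n\}$ reaches $[0,w)$ with probability one.

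For that I would run a chain-of-sets argument as in the proofs of Theorems~\ref{th3} and~\ref{th4}, applying Corollary~\ref{cor2}, but funnelled through the fixed point $K_f$, which is robustly attracting because $|f'|<1$ on $(B_f,M_f)\ni K_f$. Choose $D$ with $\varphi(K_f)<D<A_f$, where $\varphi:=h_1\circ\dots\circ h_m$, and $\eta>0$ so small that $\varphi\le D$ on $(K_f-\eta,K_f+\eta)$ and that, for some $\varepsilon^{*}\in(0,\delta)$, the map $x\mapsto f(x)+s$ sends $(K_f-\eta,K_f+\eta)$ into itself whenever $|s|\le\varepsilon^{*}$. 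Recall also that $f^{\,r}\to K_f$ uniformly on compact subsets of $(A_f,M_f]$ --- this is where $|f'|<1$ on the whole interval $(B_f,M_f)$ is used --- so that for a fixed $r$, applying $f$ with perturbations of absolute value less than $\varepsilon^{*}$ carries a prescribed compact subset of $(A_f,M_f]$ into $(K_f-\eta,K_f+\eta)$ with probability at least some positive constant.

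Now fix a small $\sigma>0$ with $\max(w,D)<A_f-\sigma$ and $A_f+\sigma<B_f$. On $[0,A_f-\sigma]$, applying $f$ together with negative perturbations for a fixed number of steps drives the process into $[0,w)$, because $Y_{n+j}\le f^{\,j}(Y_n)\le f^{\,j}(A_f-\sigma)\to0$. On the window $(A_f-\sigma,A_f+\sigma)$, a single step with $\varepsilon_n$ close to $\delta$ sends the process, with positive probability, into the fixed compact interval $[A_f+\sigma,\min(M_f+\delta,b)]$, since $A_f+\sigma-f(A_f-\sigma)<\delta$ for $\sigma$ small. On $[A_f+\sigma,M_f]$ the process reaches $(K_f-\eta,K_f+\eta)$ as above; the band $(M_f,b]$ (non-empty only when $M_g>M_f$) is carried into $[0,M_f]$ within a bounded number of $f$-steps; and from $(K_f-\eta,K_f+\eta)$ the composition $h_1,\dots,h_m$ lands the process in $[0,D]\subset[0,A_f-\sigma]$, whence the first case applies. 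Padding, where needed, with extra $f$-steps that keep the process inside $(K_f-\eta,K_f+\eta)$ or inside $[0,w)$, one obtains for a fixed $m$ a uniform positive lower bound on $P(Y_{n+m}\in[0,w)\cup(K_f-\eta,K_f+\eta)\mid Y_n\in[0,b])$, and then on $P(Y_{n+m}\in[0,w)\mid Y_n\in[0,w)\cup(K_f-\eta,K_f+\eta))$ (increasing $m$ if necessary); two applications of Corollary~\ref{cor2} now show that $\{Y_n\}$ returns to $[0,w)$ infinitely often, and since $[0,w)$ is absorbing, this yields the assertion.

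The main difficulty is exactly this rerouting. Unlike Theorem~\ref{th2}, the present statement assumes neither a global buffer condition ``for every $x$ there is $x^{*}\ge x$ with $\min(x^{*}-f(x^{*}),x^{*}-g(x^{*}))\ge\delta$'' nor any ordering of $A_f$ and $A_g$, and the deterministic invariant sets used in Theorem~\ref{th3}, such as $[0,A_f)$ and $[C,M_f]$, are no longer invariant once the perturbations are present; so everything must be pushed through $K_f$ and into the genuinely absorbing interval $[0,w)$. The delicate points are: (i) securing \emph{uniform} step counts and lower bounds on the transition probabilities near $A_f$, where the deterministic dynamics are arbitrarily slow --- handled by the single push past $A_f+\sigma$ with $\varepsilon_n\approx\delta$, which works on a fixed neighbourhood of $A_f$ whose complement is bounded away from $A_f$; (ii) the band $(M_f,b]$, non-empty when $M_g>M_f$, on which the hypothesis $|f'|<1$ gives no control and which therefore needs a short separate case analysis (the subcase where $f$ falls below $A_f$ beyond its peak in fact only helps, by driving the process toward $0$); and (iii) the routine bookkeeping caused by the truncation $\chi$ and by the overshoot of at most $\delta$ whenever $f$ or $g$ is applied. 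None of (i)--(iii) is conceptually hard, but together they make a complete proof noticeably longer than the deterministic Theorems~\ref{th3} and~\ref{th4}.
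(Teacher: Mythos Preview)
Your proposal is correct and follows essentially the same route as the paper. The paper's proof is a brief sketch saying that the argument of Theorems~\ref{th3} and~\ref{th4} carries over once the key transition inequalities are replaced by their perturbed analogues (e.g.\ $P(Y_{n+1}\in[0,M_f]\mid Y_n\in[0,M])\ge p\,P(\varepsilon_n<0)$), and then remarks that the awkward condition $f(g(A_f))\neq A_f$ is no longer needed because Corollary~\ref{cor2} can be applied directly to $[0,M_f]$ and $[0,A_f-\tfrac{\delta}{2}]\cup[A_f+\tfrac{\delta}{2},M_f]$. Your handling of the window $(A_f-\sigma,A_f+\sigma)$ via a single push with $\varepsilon_n$ near $\delta$ is exactly this remark, just phrased as a one-sided exit rather than a two-sided one; your explicit verification that $[0,w)$ is absorbing and your separate treatment of the band $(M_f,b]$ make explicit what the paper leaves implicit.
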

\begin{figure}[h]
\begin{center}
\includegraphics[height=8cm,width=8cm]{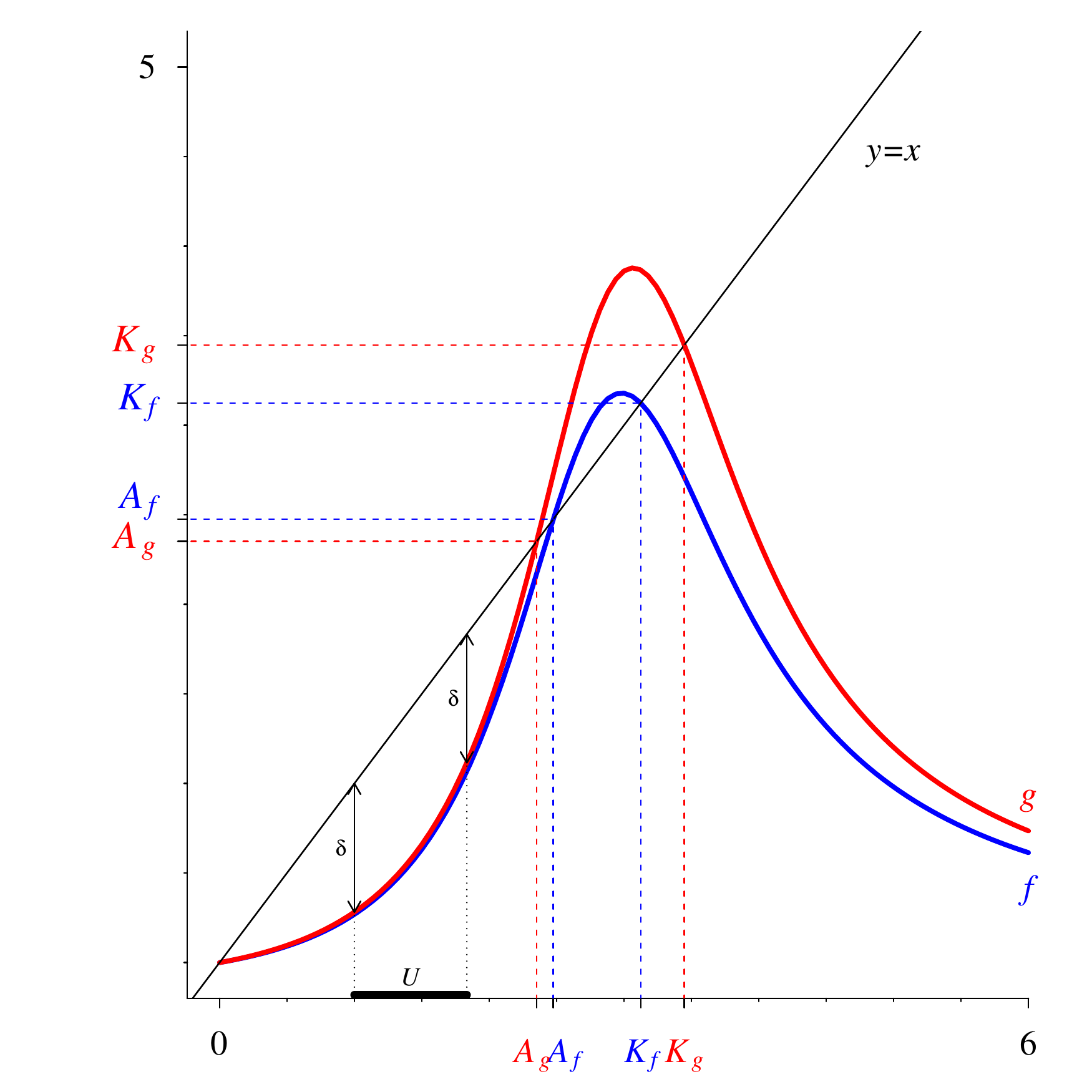}
\end{center}
\caption{Example of functions satisfying the conditions of  Theorem~\ref{th5}.}
\label{fig7}
\end{figure}

The proof is almost the same as the proof of Theorem~\ref{th3} and Theorem~\ref{th4} - inequality (\ref{eq7}) can be modified to 
\begin{equation}
P(Y_{n+1}\in [0,M_f] \mid Y_n\in [0,M])\ge pP(\varepsilon_n<0)>0, 
\end{equation} 
similarly for inequality (\ref{eq8}). Inequality (\ref{eq9}) can be shown as follows: \\
Suppose that $h_1(h_2(K_f))<A_f$ for some $h_1, h_2\in\{f,g\}$ (the proof is the same for $m$ functions). Since $h_1$ is continuous, we have $h_1(x)<A_f$ for an arbitrary $x\in I_1=(h_2(K_f)-\eta_1,h_2(K_f)+\eta_1)$ for some $\eta_1>0$. Next, from the continuity of $h_2$, there exists an $\eta_2>0$ such that $h_2(x)\in I_1$ for every $x\in I_2=(K_f-\eta_2,K_f+\eta_2)$. 
Now let $Y_n\in I_2$. Then $h_2(Y_n)\in I_1$ and since $I_1$ is an open set, there exists a $\mu \in (0,\delta)$ such that $(h_2(Y_n)-\mu, h_2(Y_n)+\mu)\subset I_1$. It follows that 
\begin{equation}
P(Y_{n+1}\in I_1 \mid Y_n \in I_2) \ge p(1-p)P(\varepsilon_n \in (-\mu,\mu))>0.
\end{equation}
Next, if $Y_{n+1}\in I_1$, then $h_1(Y_{n+1})<A_f$, and so there exists a $\nu\in(0,\delta)$ such that $h_1(Y_{n+1})+\nu<A_f$. Hence we obtain
\begin{equation}
P(Y_{n+2}<A_f \mid Y_n\in I_2)\ge (p(1-p))^2P(\varepsilon_n\in (-\mu,\mu))P(\varepsilon_{n+1}\in (-\nu,\nu))>0.
\end{equation}
Inequalities (\ref{eq10}), (\ref{eq11}), (\ref{eq12}) and (\ref{eq13}) can be shown similarly - from the continuity of the functions $f$ and $g$ and by the fact that the density of $\varepsilon_n$ is positive on $(-\delta,\delta)$.
\\ \\
\textbf{Remark.} The condition $f(g(A_f))\neq A_f$ in Theorem~\ref{th4} is not necessary in this case - Corollary~\ref{cor2} can be applied directly to the sets $[0,M_f]$ and $[0,A_f-\frac{\delta}{2}]\cup [A_f+\frac{\delta}{2},M_f]$.

\section{Concluding remarks}
\begin{arabiclist}[(5)]
\item In  Section~\ref{sec2}, we showed that models (\ref{eq2}) and (\ref{eq3}) lead to similar results as the deterministic model, if we work with two strictly increasing Allee maps (recall that we distinguish only between extinction and survival here).
If the initial population size is greater than some threshold, the population persists; if it is smaller than another threshold, the population becomes extinct. The only difference from the non-stochastic case is the region between these two thresholds - if the initial population size is in this region, both scenarios (extinction or survival) are possible.
\item In Theorems~\ref{th1} and ~\ref{th2} only the inequality $A_f<A_g<K_f<K_g$ was considered. If the order of these values is different, proofs and results are very similar. The only difference is in the case $A_f<K_f<A_g<K_g$ (or $A_g<K_g<A_f<K_f$). For these two orderings of the values, it can be easily shown that the population becomes almost surely extinct.
\item If $\delta$ does not satisfy the conditions of Theorem~\ref{th2} or Theorem~\ref{th5} (i.e. $\varepsilon$ can take relatively high values), then it is not possible to predict the long-term behavior of the system. Its values can increase even from zero to high numbers and vice versa.
\begin{figure}[h]
\begin{center}
\includegraphics[height=8cm,width=8cm]{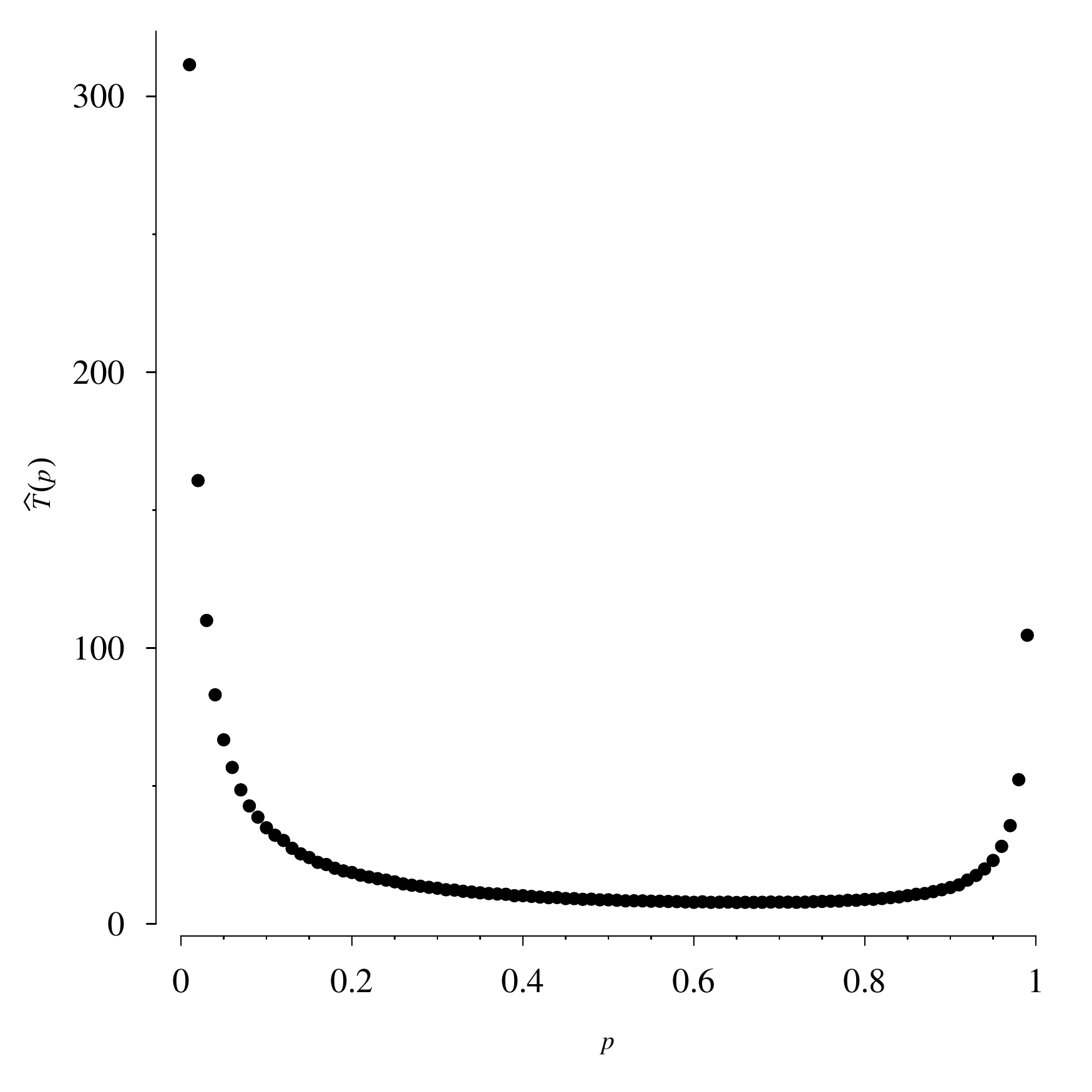}
\end{center}
\caption{Estimates for the expected time $T(p)$.}
\label{fig8}
\end{figure}
\item In Section~\ref{Sec3}, we showed that under some conditions the population becomes almost surely extinct. This is caused by the fact that the population size almost surely drops below the critical value $\min(A_f,A_g)$, which leads to the extinction. Therefore, it would be interesting to calculate the expected value of the time after which the system drops below this critical value. Generally, this can be a very difficult problem, because this time depends on the given functions $f$ and $g$ and also on the probability $p$ with which we choose functions $f$ and $g$. However, if we fix the functions and the probability, the expected time can be estimated by simulations. We examined the expected time $T(p)$ in the case where the functions were the same as in the second example for different probabilities and for $x_0\in (A_f, M_f)$. As previously mentioned (in the second example), if we consider only one function ($f$ or $g$), the population size never drops below the critical value, hence $T(p)\to\infty$ for $p\to 0$ and $p\to 1$. For $p\in (0,1)$ the estimates are shown in Fig.~\ref{fig8}. 
A question is whether it is possible to calculate these values exactly and in more general situations.
\item In  Section~\ref{Sec3}, we considered only specific conditions for unimodal Allee maps, which have interesting consequences. It would be interesting to study other situations.
\end{arabiclist}

\nonumsection{Acknowledgments} \noindent 
Supported by the VEGA Grant No. 2/0047/15 from the Slovak Scientific Grant Agency (Kov\'a\v c, Jankov\'a) and the UK/344/2016 grant from Comenius University in Bratislava (Kov\'a\v c).

\end{document}